\tikzset{
    triple/.style args={[#1] in [#2] in [#3]}{
        #1,preaction={preaction={draw,#3},draw,#2}
    }
}      
\theoremstyle{plain}
\newtheorem{thm}{Theorem}[section]
\newtheorem{lem}[thm]{Lemma}
\theoremstyle{definition}
\declaretheorem[name=Remark, style=definition, qed={\lower-0.3ex\hbox{$\blacksquare$}}, unnumbered]{rmk}
\newtheorem*{ack}{Acknowledgement}
\newtheorem{conj}[thm]{Conjecture}
\numberwithin{equation}{section}
\newcommand{\A}{\mathbb{A}}
\newcommand{\Z}{\mathbb{Z}}
\newcommand{\N}{\mathbb{N}}
\let\P\relax
\newcommand{\P}{\mathbb{P}}
\newcommand{\C}{\mathbb{C}}
\newcommand{\X}{\mathbb{X}}
\let\k\relax
\newcommand{\k}{\mathbf{k}}
\let\O\relax
\newcommand{\O}{\mathcal{O}}
\newcommand{\Y}{\mathcal{Y}}
\newcommand{\B}{\mathcal{B}}
\let\L\relax
\newcommand{\L}{\mathcal{L}}
\let\S\relax
\newcommand{\S}{\mathcal{S}}
\newcommand{\g}{\mathfrak{g}}
\newcommand{\h}{\mathfrak{h}}
\let\b\relax
\newcommand{\b}{\mathfrak{b}}
\newcommand{\n}{\mathfrak{n}}
\newcommand{\br}[1]{\left\langle{#1}\right\rangle}
\newcommand{\scvar}[1]{\overline{\mathcal{S}(#1)}}
\newcommand{\nn}{N}
\newcommand{\iA}{5}
\newcommand{\iB}{1}
\newcommand{\iC}{4}
\newcommand{\iD}{2}
\newcommand{\iE}{3}
\newcommand{\iF}{6}
\DeclareMathOperator{\im}{\textup{im}}
\DeclareMathOperator{\ad}{\textup{ad}}
\DeclareMathOperator{\Ad}{\textup{Ad}}
\DeclareMathOperator{\ch}{\textup{char}}
\DeclareMathOperator{\Irr}{\textup{Irr}}
\DeclareMathOperator{\Lie}{\textup{Lie}}
\DeclareMathOperator{\codim}{\textup{codim}}
\DeclareMathOperator{\Pic}{\textup{Pic}}
\title{Springer fibers for the minimal and the minimal special nilpotent orbits}
\author{Dongkwan Kim}
\address{Department of Mathematics\\
  Massachusetts Institute of Technology\\
  Cambridge, MA 02139-4307\\
  U.S.A.}
\email{sylvaner@math.mit.edu}
\date{\today}							
\begin{document}
\begin{abstract} We describe Springer fibers corresponding to the minimal and minimal special nilpotent orbits of simple Lie algebras. As a result, we give an answer to the conjecture of Humphreys regarding some graphs attached to Springer fibers.
\end{abstract}

\setcounter{tocdepth}{1}
\maketitle

\renewcommand\contentsname{}
\tableofcontents

\section{Introduction}
Let $G$ be a simple algebraic group over an algebraically closed field $\k$ and let $\g$ be its Lie algebra. We assume that $\ch \k$ is good. Let $B$ be a Borel subgroup of $G$ and $\b$ be its corresponding Lie algebra, which is by definition a Borel subalgebra of $\g$. For a nilpotent element $\nn \in \g$, we consider
$$\B_\nn \colonequals \{gB \in G/B \mid \Ad g^{-1}(\nn) \in \b\},$$
which is called the Springer fiber of $\nn$. Since the celebrated paper of Springer \cite{springer:trig}, it has become one of the central objects in geometric representation theory.

This paper mainly considers the case when $\nn$ is contained in the minimal nilpotent orbit or the minimal special nilpotent orbit of $\g$. Here we say a nilpotent orbit of $\g$ is minimal if it is minimal among non-trivial nilpotent orbits with respect to the order defined by $\O \leq \O' \Leftrightarrow \O \subset \overline{\O'}$. Similarly we say a nilpotent orbit of $\g$ is minimal special if it is minimal among non-trivial special nilpotent orbits.

For $\nn \in \g$ contained in the minimal nilpotent orbit of $\g$, the corresponding Springer fiber $\B_\nn$ is studied by Dolgachev and Goldstein \cite{dg:minimal}. In their paper a graph $\Gamma_\nn$ attached to $\B_\nn$ is introduced (originally defined by Kazhdan and Lusztig \cite[Section 6.3]{kl:hecke}): its vertices are the irreducible components of $\B_\nn$ and two vertices are connected by an edge if and only if their intersection is of codimension 1 in each irreducible component. In this paper we extend their results to include the case when $\nn$ is contained in the minimal special nilpotent orbit. (Note that the minimal and the minimal special nilpotent orbit coincide if and only if $G$ is of simply-laced type.)

This paper is motivated by the conjecture of Humphreys \cite{humphreys:conj}. (See also \cite{humphreys:mof} and \cite{humphreys:minimal}.) In \cite{dg:minimal} Dolgachev and Goldstein observed a duality between the graphs attached to Springer fibers corresponding to the subregular and the minimal nilpotent orbit when $G$ is of simply-laced type, but this duality breaks down when $G$ is not simply-laced. Instead, Humphreys conjectured that one needs to consider the minimal special nilpotent orbit instead of the minimal one, and also that the Langlands dual should come into the picture. (Indeed, the idea to consider minimal special nilpotent orbits already appeared in the last paragraph of \cite[p.34]{dg:minimal} following the remark of Spaltenstein.)

The main goal of this paper is to describe $\Gamma_\nn$ for all simple $\g$ and all minimal/minimal special nilpotent orbits. As a result, we verify the conjecture of Humphreys as follows.

\begin{thm}[Main theorem]\label{thm:main} Let $G$ be a simple algebraic group over an algebraically closed field $\k$ and let $\nn \in \g \colonequals \Lie G$ be contained in the minimal special nilpotent orbit of $\g$. (We assume $\ch \k$ is good.) Then $\Gamma_\nn$ is isomorphic to the graph corresponding to a subregular nilpotent element in the Lie algebra of the Langlands dual of $G$.
\end{thm}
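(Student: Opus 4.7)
The plan is to prove the theorem by a case-by-case analysis on the Dynkin type of $\g$, treating the simply-laced and non-simply-laced cases separately.

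In the simply-laced case (types $A_n$, $D_n$, $E_6$, $E_7$, $E_8$) the minimal and minimal special nilpotent orbits coincide and $G$ is isomorphic to its own Langlands dual, so the theorem reduces to combining two known results: Dolgachev--Goldstein \cite{dg:minimal} showed that $\Gamma_\nn$ is isomorphic to the Dynkin diagram of $G$ for $\nn$ in the minimal orbit, while Slodowy's classical analysis of the transverse slice to the subregular orbit identifies the graph of the subregular Springer fiber of $G$ with that same Dynkin diagram. The substantive work therefore concerns the four non-simply-laced types $B_n$, $C_n$, $F_4$, and $G_2$.

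For each non-simply-laced type I would proceed in three steps. First, identify the minimal special orbit: in each case it turns out to be the short-root orbit, strictly larger than the long-root (minimal) orbit; explicitly, it has partition $(3, 1^{2n-2})$ in $\so_{2n+1}$, partition $(2^2, 1^{2n-4})$ in $\sp_{2n}$, and is the orbit denoted $\tilde A_1$ in $F_4$ and $G_2$. Second, compute the target graph: by Slodowy's description of the subregular Springer fiber, the graph attached to $G^\vee$ is the Dynkin diagram of the simply-laced cover of that of $G^\vee$, yielding $A_{2n-1}$ when $G = B_n$, $D_{n+1}$ when $G = C_n$, $E_6$ when $G = F_4$, and $D_4$ when $G = G_2$. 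Third, describe $\B_\nn$ explicitly and compute $\Gamma_\nn$: in the classical types, use Spaltenstein's combinatorial model to parameterize the irreducible components by standard tableaux (or admissible isotropic flag sequences), count the components, and translate the codimension-1 intersection condition into an explicit combinatorial move on tableaux; in the exceptional types, perform a direct geometric analysis of the (low-dimensional) Springer fiber, determining components and intersections by inspection.

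The main obstacle lies in the non-simply-laced classical types $B_n$ and $C_n$ for $n \geq 3$. Although Spaltenstein's parameterization gives the expected number of components, verifying that the edges extracted from the codimension-1 intersection criterion really reproduce the linear structure of $A_{2n-1}$ or the forked structure of $D_{n+1}$ requires a careful local analysis at each intersection point, rather than merely counting. This is where most of the technical work will be concentrated. The exceptional cases $F_4$ and $G_2$ are then finite verifications that can be done by hand, and, as noted, the simply-laced case follows from existing results.
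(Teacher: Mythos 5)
Your overall strategy --- a case-by-case analysis over the Dynkin type, with the simply-laced cases following from Dolgachev--Goldstein and Slodowy, and the non-simply-laced cases requiring explicit description of the Springer fiber --- is indeed the approach the paper follows. However, there are two concrete errors in the way you set up the comparison.

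The serious one is that you have the target graphs for $B_n$ and $C_n$ interchanged. For a subregular nilpotent in a non-simply-laced group $H$, the component graph of the Dynkin curve is Slodowy's associated homogeneous diagram of $H$, obtained by replacing each \emph{long} simple root of $H$ by $d$ nodes ($d$ the squared length ratio) and keeping short roots as single nodes. This is \emph{not} the unfolding of $H$; it is the unfolding of $H^\vee$. Concretely, the subregular graph of $C_n$ is $D_{n+1}$ (its unique long root blows up into the fork), and the subregular graph of $B_n$ is $A_{2n-1}$. So for $G = B_n$, with $G^\vee = C_n$, the target graph is $D_{n+1}$, not $A_{2n-1}$ as you wrote, and this is exactly what the minimal special fiber of $B_n$ produces: $n+1$ components in a $D_{n+1}$ configuration (one of the $n$ isotropic Grassmannian conditions degenerates into two components at the top step). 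Similarly for $G = C_n$ the target is $A_{2n-1}$, matching the $2n-1$ components of the minimal special fiber. With your version of the table, Step 3 would contradict Step 2 in both of these types. A secondary error: in type $G_2$ the short-root orbit $\widetilde{A}_1$ is \emph{not} special; the minimal special orbit of $G_2$ is $G_2(a_1)$, which is the subregular orbit, so the statement there reduces to a tautology (the Dynkin curve of $G_2$, a $D_4$-shaped configuration of $\P^1$'s). This is precisely why the paper's Lemma~\ref{lem:choice} has a ``$G$ not of type $G_2$'' caveat in its second part.

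As a more minor matter of approach: for the classical types the paper does not use Spaltenstein's tableau combinatorics. It writes the components directly as spaces of partial isotropic flags satisfying explicit incidence with $\im\nn$, $\ker\nn$, $\im\nn^2$, and computes their dimensions and intersections via locally trivial fibrations over (isotropic) Grassmannians. A remark in Section~\ref{sec:typeB} explains that the more naive descriptions lead to isotropic Grassmannians for degenerate forms, which the paper's formulation avoids. Also, characterizing the exceptional computations as ``finite verifications by hand'' understates them: the $E_8$ case requires Poincar\'e polynomials of Bruhat intervals of length $91$, and the minimal special $F_4$ case occupies an appendix with a detailed Schubert-cell enumeration, a symmetry argument via the $C(\nn)\simeq\Z/2$ action, and a smoothness argument using intersections with parabolic orbits (Lemma~\ref{smoothint}) to rule out spurious edges.
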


In fact, this theorem has a generalization to some special cases in type $A$. In \cite{fresse:unified} Fresse observed the following; let $G$ be of type $A$ and $N_\lambda \in \g$ be a nilpotent element which corresponds to a partition $\lambda$. If $\lambda$ is a hook, two-row, or two-column partition, then $\Gamma_{N_\lambda}$ is isomorphic to $\Gamma_{N_{\lambda^t}}$, where ${\lambda^t}$ is the transpose of $\lambda$. Note that $N_\lambda$ and $N_{\lambda^t}$ are related by the Lusztig-Spaltenstein duality on the set of special nilpotent orbits in $\g$. In general it is known that for such a pair of special nilpotent orbits dual to each other, their Springer representations have the same dimension. Thus it is natural to ask the following question.

\begin{conj} \label{conj} For $N \in \g$ contained in a special nilpotent orbit, the quotient of $\Gamma_N$ by the action of the component group of the stabilizer of $N$ in $G$ is isomorphic to that corresponding to the Lusztig-Spaltenstein dual of the orbit.
\end{conj}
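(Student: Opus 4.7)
The plan is to reduce the conjecture to a type-by-type verification, leveraging the main theorem of this paper as an anchor at one extreme of the duality (minimal special paired with subregular). In each type the first step is to identify a combinatorial parameterization of the components of $\B_N$ together with the action of the component group $A(N)$ of the stabilizer of $N$, and to produce a candidate bijection of vertex sets $\Gamma_N / A(N) \leftrightarrow \Gamma_{N'} / A(N')$, where $N'$ lies in the Lusztig-Spaltenstein dual orbit. The second step, which is where the technical work lies, is to verify that this bijection preserves the codimension-one intersection relation.

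For type $A$ the component group is trivial and every orbit is special, so the conjecture reduces to $\Gamma_{N_\lambda} \cong \Gamma_{N_{\lambda^t}}$ for every partition $\lambda$. Fresse has handled hooks, two-row, and two-column shapes; the natural route in general is to use Spaltenstein's parameterization of components by standard Young tableaux of shape $\lambda$, together with a Vogan/Fung/Khovanov--Russell-type combinatorial description of the codimension-one adjacency on tableaux, and to show that transposition $T \mapsto T^t$ preserves this relation. For classical types $B$, $C$, and $D$, I would use Spaltenstein's orthogonal/symplectic tableau model together with Lusztig's description of $A(N)$ and its action on components; the subtle points here are the very even orbits in type $D$ (where $A(N)$ acts in a non-obvious way) and the fact that the two combinatorial descriptions of the graph on either side of LS duality are not manifestly symmetric. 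For the five exceptional types, the verification should be a finite case-by-case check using tables of Springer fiber components.

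The main obstacle in all types is the codimension-one adjacency relation rather than the set of components itself. Even in type $A$ no explicit closed formula for the adjacency graph of $\B_{N_\lambda}$ is known in terms of $\lambda$, and the Fresse cases succeed precisely because the adjacency there reduces to a rule that is manifestly transposition-invariant. A unifying strategy would be to interpret $\Gamma_N$ in terms of Kazhdan--Lusztig cell / $W$-graph data for the Springer representation $E$ attached to $N$, and to exploit the fact that Lusztig-Spaltenstein duality on special orbits corresponds to the involution $E \mapsto E \otimes \sgn$. If one could show that this tensoring preserves the relevant edges of the $W$-graph up to the $A(N)$-action, the conjecture would follow uniformly; pinning down such a correspondence is the real crux and appears to lie well beyond the minimal and minimal special range treated in this paper.
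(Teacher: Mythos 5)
This statement is labeled as a Conjecture in the paper, and the paper offers no proof of it. What the paper actually does is verify the conjecture only in the narrow range it can handle explicitly: for $N$ in a minimal or minimal special nilpotent orbit, the paper describes $\Gamma_N$ and the $C(N)$-action case by case, establishes the Main Theorem (the minimal special $\Gamma_N$ matches the subregular graph for the Langlands dual), and leaves the reader to check that these computations are consistent with the conjectured statement. There is no argument in the paper that addresses a general special orbit $N$.

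You recognize this correctly and do not overclaim: your proposal anchors on the minimal special vs.\ subregular pairing the paper settles, and explicitly states that a general proof lies beyond the treated range. That said, most of the content in your proposal is a speculative roadmap rather than a proof, and a few of the ingredients deserve caution. The Spaltenstein tableau parameterization gives the \emph{vertices} of $\Gamma_N$ in classical types, but the codimension-one adjacency relation is the hard part, exactly as you note; there is no known closed description of it in type $A$ beyond the Fresse cases and the very small Springer fibers, so ``show transposition preserves the adjacency rule'' is at present a restatement of the problem rather than a step toward solving it. The $W$-graph idea is appealing in spirit but problematic as stated: the Springer representation attached to a special orbit $N$ is a $W$-module equipped with a basis indexed by $\Irr(\B_N)$ (modulo the $C(N)$-action), but $\Gamma_N$ is defined by geometric codimension-one intersections, not by the $\mu$-function of the Hecke algebra, and no theorem identifies the two. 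Moreover $E\mapsto E\otimes\sgn$ implements Lusztig--Spaltenstein duality on the $W$-module level, but it does not by itself provide a bijection between the component sets of the two Springer fibers, let alone one preserving edges. So the ``uniform strategy'' you propose rests on an unproved (and nontrivial) identification of $\Gamma_N$ with combinatorial $W$-graph data. Your reduction is a reasonable plan of attack, but it does not reduce the conjecture to anything currently known, and it should not be read as a proof; in that respect your assessment of the difficulty is accurate, and it matches the paper's framing of the statement as an open conjecture supported by the minimal and minimal special evidence.
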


Also, there is an order preserving bijection between the sets of special nilpotent orbits in $\Lie(SO_{2n+1})(\k)$ and $\Lie(Sp_{2n})(\k)$. Since such two orbits related by this bijection have the same irreducible Springer representation, the following question is also natural to ask.
\begin{conj} \label{conj} Let $N \in \Lie(SO_{2n+1})(\k)$, $N' \in \Lie(Sp_{2n})(\k)$ be contained in special nilpotent orbits related by the order-preserving bijection described above. Then the quotient of $\Gamma_N$ by the action of the component group of the stabilizer of $N$ in $G$ is isomorphic to that corresponding to $N'$.
\end{conj}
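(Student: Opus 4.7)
The plan is to reduce the conjecture to a combinatorial comparison. Special nilpotent orbits in $\so_{2n+1}$ and $\sp_{2n}$ are both parametrized by partitions satisfying standard parity conditions, and the order-preserving bijection in the statement can be made explicit at the level of partitions, either via the Lusztig-Spaltenstein algorithm applied to the $B$- and $C$-symbols, or equivalently by viewing both sides as labels for the same two-sided cell. Fix a pair $(\lambda, \mu)$ of partitions matched under this bijection, so that $N$ has Jordan type $\lambda$ and $N'$ has Jordan type $\mu$.

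First I would use Spaltenstein's description of the irreducible components of $\B_N$ in classical types, indexing them by admissible (signed/domino) tableaux of shape $\lambda$, and similarly for $\B_{N'}$ by tableaux of shape $\mu$. Next, I would write down the action of the component group $A(N) = \pi_0(Z_G(N))$ on this indexing set: in classical types $A(N)$ is an elementary abelian $2$-group, and its action has been made explicit in work of Spaltenstein and Shoji. One then needs to identify the orbit set $\{\text{type-}B\text{ tableaux}\}/A(N)$ with the set of type-$C$ tableaux indexing components of $\B_{N'}$; the cleanest expected match goes through Lusztig's special pieces, which are designed precisely to be ``$A$-equivariant" under this kind of duality.

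The core step is to compare codimension-$1$ intersections. For type $A$, work of Fung and Vargas shows that two components meet in codimension $1$ exactly when their tableaux differ by an elementary box-swap; I would establish the analogous but more intricate criterion for types $B$ and $C$ using local charts on $\B_N$ along a one-parameter family degenerating one component into another, as in Dolgachev-Goldstein's analysis of the minimal orbit. Once such criteria are in place on both sides, verifying the conjecture becomes a combinatorial check that the adjacency relation on type-$C$ tableaux matches the relation on $A(N)$-orbits of type-$B$ tableaux inherited from $\Gamma_N$.

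The hardest step, I expect, will be the last: showing that whenever two components in \emph{different} $A(N)$-orbits of $\Gamma_N$ become identified after quotienting, they also get joined by an edge precisely when the corresponding type-$C$ tableaux are adjacent. This mixes the combinatorics of the component-group action with delicate local geometry of two meeting Springer components, and there is no uniform treatment of this in the literature. A reasonable route is to first verify the conjecture in low-complexity cases (minimal and minimal special, covered by Theorem \ref{thm:main}; subregular, for which $\Gamma_N$ is the Dynkin diagram; and two-row/two-column partitions, handled by Fresse's methods), and then to look for a symbol-theoretic uniform proof.
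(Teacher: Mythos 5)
This statement is a \emph{conjecture} in the paper, not a theorem, and the paper does not give a general proof of it. What the paper actually does is verify the conjecture only for the single pair of orbits it has already analyzed in detail — the minimal special nilpotent orbit of $\Lie(SO_{2n+1})$ (Jordan type $(1^{2n-2}3^1)$, Section 3.2) matched with the minimal special nilpotent orbit of $\Lie(Sp_{2n})$ (Jordan type $(1^{2n-4}2^2)$, Section 3.3). In the type $B$ case the paper computes $\Gamma_N$ to be a $D_n$-shaped graph with $C(N)\simeq\Z/2$ swapping the two tail nodes $X_n', X_n''$; the quotient is a path on $n$ vertices. In the type $C$ case $\Gamma_{N'}$ is a path on $2n-1$ vertices and $C(N')\simeq\Z/2$ folds it in half, giving a path on $n$ vertices. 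Direct inspection then shows the two quotient graphs are isomorphic. That explicit comparison is the entirety of the paper's ``proof,'' and the sentence ``one can easily verify that Conjecture~\ref{conj} holds in such cases'' is pointing the reader to exactly this computation.

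Your proposal is therefore aiming at something much stronger than what the paper establishes: a uniform proof of the conjecture for all special orbit pairs, via Spaltenstein's tableau/domino parametrization of $\Irr(\B_N)$, Shoji's description of the $A(N)$-action, a conjectural codimension-one adjacency criterion in types $B$/$C$, and a symbol-theoretic matching of quotient graphs. As a research program this is reasonable and in the right direction, but you have correctly identified that the key steps are open: there is currently no established combinatorial criterion for codimension-one intersection of Springer components outside type $A$ (and a handful of low-rank cases), and the compatibility between the $A(N)$-action and the adjacency relation under the $B\leftrightarrow C$ bijection has no known uniform proof. So your proposal is not a proof but a plan, and it does not line up with what the paper actually does. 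If you want to reproduce the paper's reasoning, you should restrict attention to the minimal special pair, quote the explicit graphs and $C(N)$-actions from Sections 3.2 and 3.3, form both quotient graphs, and observe that each is a path on $n$ vertices.
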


In this paper, for each $N \in \g$ in the minimal and the minimal special nilpotent orbit we also analyze the action on $\Gamma_N$ of the component group of the stabilizer of $N$ in $G$, and thus one can easily verify that Conjecture \ref{conj} holds in such cases. We also give an explicit description of each irreducible component for each such Springer fiber. As a result, in most cases we check whether all components of each Springer fiber are smooth or not, except when $G$ is of type $F_4$ and $\nn$ is contained in the minimal special nilpotent orbit.

We remark that a lot of calculations for exceptional types are done using SageMath \cite{sagemath}, and most of the calculations to obtain Kazhdan-Lusztig polynomials are done using Coxeter 64 \cite{coxeter}.

\begin{ack} The author is grateful to Roman Bezrukavnikov who brought this topic to his attention and made valuable suggestions to him. He also thanks George Lusztig and David Vogan for helpful remarks. Special thanks are due to Jim Humphreys for his careful reading of drafts of this paper, pointing out several errors, and giving thoughtful comments.
\end{ack}

\section{Preliminaries}\label{sec:prelim}
Let $\k$ be an algebraically closed field. For a variety $X$ over $\k$, we write $\Irr(X)$ to be the set of irreducible components of $X$. If $Y$ is a subvariety of $X$, then $\overline{Y} \subset X$ denotes the closure of $Y$ in $X$. For varieties $X$, $Y$, and $Z$ over $\k$, we write $Z:X\dashleftarrow Y$ if $Z$ is (algebraically) a locally trivial fiber bundle over $X$ with fiber $Y$. It does not describe the fiber bundle structure explicitly, but note that $Z$ is smooth if $X$ and $Y$ are smooth. Also for varieties $X_i$ and $Z$, $Z:X_1 \dashleftarrow X_2 \dashleftarrow \cdots \dashleftarrow X_n$ means $Z:((X_1 \dashleftarrow X_2) \dashleftarrow \cdots) \dashleftarrow X_n$.

Let $Gr(k, n)$ be the Grassmannian of $k$-dimensional subspaces in $\k^n$. Similarly, when $\ch \k \neq 2$ we denote by $OGr(k,n)$ the Grassmannian of $k$-dimensional isotropic subspaces in $\k^n$ equipped with a non-degenerate symmetric bilinear form. Likewise, when $\ch \k \neq 2$ we define $SGr(k,n)$ to be the Grassmannian of $k$-dimensional isotropic subspaces in $\k^n$ equipped with a non-degenerate symplectic bilinear form. The last one is only well-defined when $n$ is even, but when $n$ is odd we still write $SGr(k,n)$ to denote an ``odd'' symplectic Grassmannian in the sense of \cite{mihai}. Then $Gr(k,n), OGr(k,n), SGr(k,n)$ are smooth of dimension $k(n-k), \frac{1}{2}k(2n-3k-1), \frac{1}{2}k(2n-3k+1)$, respectively. (In the odd symplectic case, it follows from \cite[Proposition 4.1]{mihai}.)

Let $G$ be a simple algebraic group over $\k$. We shall assume $\ch\k$ is good for $G$. We fix a Borel subgroup $B\subset G$ and its maximal torus $T \subset B$. We denote by $\Phi$ the set of roots of $G$ with respect to $T$, and $\Phi^+ \subset \Phi$ be the set of positive roots with respect to the choice of $T \subset B$. Then the set of simple roots $\Pi \subset \Phi^+$ is well-defined. For any $\alpha \in \Phi$, let $U_\alpha \subset G$ be the one-parameter subgroup corresponding to $\alpha$, which is isomorphic to $\A^1_\k$ as a group. We have a decomposition
$$B=T \cdot \prod_{\alpha \in \Phi^+} U_\alpha$$
and $\prod_{\alpha \in \Phi^+} U_\alpha \subset B$ is its unipotent radical, denoted $U$.

Let $\g \colonequals \Lie G$ be the Lie algebra of $G$, $\b \colonequals \Lie B \subset \g$ be the Borel subalgebra of $\g$ corresponding to $B$, and $\h \colonequals \Lie T \subset \b$ be the Cartan subalgebra corresponding to $T$. Also, for any $\alpha \in \Phi$, we let $X_\alpha \colonequals \Lie U_\alpha \subset \b$ be the Lie subalgebra corresponding to $U_\alpha$. Then we have a decomposition
$$\b = \h \oplus \bigoplus_{\alpha \in \Phi^+} X_\alpha $$
and $\bigoplus_{\alpha \in \Phi^+} X_\alpha=\Lie U \subset \b$ is the nilpotent radical of $\b$, denoted $\n$.

Let $W$ be the Weyl group of $G$, which is the quotient of the normalizer of $T\subset G$ by $T$. Then there exists a set of simple reflections $S \subset W$ which corresponds to $\Pi \subset \Phi^+$, and $(W,S)$ becomes a Coxeter group. Also there exists a well-defined length function $l: W \rightarrow \N$ where $l(w)$ is the length of some/any reduced expression of $w \in W$ with respect to $S$.

Let $\B \colonequals G/B$ be the flag variety of $G$. We also use the notation $\B(G)$ if we need to specify the group $G$. For any $s \in S$, we define a line of type $s$ to be 
$$\overline{gBsB/B} = gBsB/B \sqcup gB/B \subset G/B$$ for some $g \in G$. Thus if we let $P_s\subset G$ be a parabolic subgroup of $G$ generated by $\br{B, s}$, then a line of type $s$ is $gP_s/B \subset G/B$ for some $g\in G$.

For each $w\in W$, we define $\S(w) \colonequals BwB/B \subset \B$ to be the (open) Schubert cell corresponding to $w$. Then we have a Bruhat decomposition
$$\B = \bigsqcup_{w\in W} BwB/B$$
which gives a stratification of $\B$ into affine spaces. Also the (closed) Schubert variety of $w$ is $\scvar{w}$.

For any element $\nn \in \g$, we define $\B_\nn$ to be the Springer fiber of $\nn$, i.e.
$$\B_\nn \colonequals \{gB/B \in G/B \mid \Ad{g}^{-1}(\nn) \in \b\}.$$
In most cases $\nn \in \g$ is a nilpotent element in this paper. Attached to $\B_\nn$ there exists a graph $\Gamma_\nn$ which is defined as follows. The set of vertices of $\Gamma_\nn$, denoted $V(\Gamma_\nn)$, is simply $\Irr(\B_\nn)$. The set of edges of $\Gamma_\nn$, denoted $E(\Gamma_\nn)$, contains $(X,Y)$ for some $X, Y \in \Irr(\B_\nn)$ if and only if $X\cap Y$ is of codimension 1 in $X$ and $Y$, i.e. $\dim X\cap Y = \dim X-1=\dim Y-1$. (Note that $\dim X=\dim Y=\dim \B_\nn$ by \cite{spaltenstein:equidim}.) Also for any $X \in V(\Gamma_\nn)=\Irr(\B_\nn)$ we define
$$I_X \colonequals \{s \in S \mid X \textup{ is a union of lines of type } s\}.$$
Let $C(\nn)$ be the component group of the stabilizer of $\nn$ in $G$. Then $C(\nn)$ naturally acts on $\Irr \B_\nn$ and $\Gamma_\nn$. Also for any $\sigma \in C(\nn)$ and $X\in \Irr \B_\nn$, $I_{X} = I_{\sigma(X)}$.

We label the simple roots of $G$ by $\alpha_1, \cdots, \alpha_n$ where $n$ is the rank of $G$, such that it agrees with the following Dynkin diagrams. Also we label $S=\{s_1, \cdots, s_n\}$ where each $s_i$ corresponds to $\alpha_i$. For short, we write $s(i_1i_2\cdots i_r) \colonequals s_{i_1}s_{i_2}\cdots s_{i_r}$.

\begin{center}
Type $A_n$:
\begin{adjustbox}{valign=c}
\begin{tikzpicture}[scale=0.8]
	\node at (0 cm,0) {$1$};
	\node at (2 cm,0) {$2$};
	\node at (4 cm,0) {$\cdots$};
	\node at (6 cm,0) {$n-1$};
	\node at (8 cm,0) {$n$};
	
    \draw[thick, rounded corners=7] (5.4, -0.3) rectangle (6.6, 0.3) {};

    \draw[thick] (0 cm,0) circle (3 mm);
    \draw[thick] (2 cm,0) circle (3 mm);
    \draw[thick] (8 cm,0) circle (3 mm);
    
    \draw[thick] (0.3, 0) -- (1.7,0);
    \draw[thick] (2.3, 0) -- (3.4,0);
    \draw[thick] (4.6, 0) -- (5.4,0);
    \draw[thick] (6.6, 0) -- (7.7,0);

\end{tikzpicture}
\end{adjustbox}

Type $B_n$:
\begin{adjustbox}{valign=c}
\begin{tikzpicture}[scale=0.8]
	\node at (0 cm,0) {$1$};
	\node at (2 cm,0) {$2$};
	\node at (4 cm,0) {$\cdots$};
	\node at (6 cm,0) {$n-1$};
	\node at (8 cm,0) {$n$};
    \draw[thick, rounded corners=7] (5.4, -0.3) rectangle (6.6, 0.3) {};
    
    \draw[thick] (0 cm,0) circle (3 mm);
    \draw[thick] (2 cm,0) circle (3 mm);
    \draw[thick] (8 cm,0) circle (3 mm);
    
    \draw[thick] (0.3, 0) -- (1.7,0);
    \draw[thick] (2.3, 0) -- (3.4,0);
    \draw[thick] (4.6, 0) -- (5.4,0);
    
    \begin{scope}[decoration={markings,mark=at position 0.6 with {\arrow{>[scale=2.5,
          length=2.5,
          width=3]}}}] 
    	\draw[postaction={decorate},thick,double,] (6.6, 0) -- (7.7,0);
    \end{scope}

\end{tikzpicture}
\end{adjustbox}

Type $C_n$:
\begin{adjustbox}{valign=c}
\begin{tikzpicture}[scale=0.8]
	\node at (0 cm,0) {$1$};
	\node at (2 cm,0) {$2$};
	\node at (4 cm,0) {$\cdots$};
	\node at (6 cm,0) {$n-1$};
	\node at (8 cm,0) {$n$};
    \draw[thick, rounded corners=7] (5.4, -0.3) rectangle (6.6, 0.3) {};
    
    \draw[thick] (0 cm,0) circle (3 mm);
    \draw[thick] (2 cm,0) circle (3 mm);
    \draw[thick] (8 cm,0) circle (3 mm);
    
    \draw[thick] (0.3, 0) -- (1.7,0);
    \draw[thick] (2.3, 0) -- (3.4,0);
    \draw[thick] (4.6, 0) -- (5.4,0);
    
    \begin{scope}[decoration={markings,mark=at position 0.6 with {\arrow{<[scale=2.5,
          length=2.5,
          width=3]}}}] 
    	\draw[postaction={decorate},thick,double,] (6.6, 0) -- (7.7,0);
    \end{scope}

\end{tikzpicture}
\end{adjustbox}

Type $D_n$:
\begin{adjustbox}{valign=c}
\begin{tikzpicture}[scale=0.8]
	\node at (0 cm,0) {$1$};
	\node at (2 cm,0) {$2$};
	\node at (4 cm,0) {$\cdots$};
	\node at (6 cm,0) {$n-2$};
	\node at (6 cm,-2) {$n-1$};
	\node at (8 cm,0) {$n$};
    \draw[thick, rounded corners=7] (5.4, -0.3) rectangle (6.6, 0.3) {};
    \draw[thick, rounded corners=7] (5.4, -2.3) rectangle (6.6, -1.7) {};
    
    \draw[thick] (0 cm,0) circle (3 mm);
    \draw[thick] (2 cm,0) circle (3 mm);
    \draw[thick] (8 cm,0) circle (3 mm);
    
    \draw[thick] (0.3, 0) -- (1.7,0);
    \draw[thick] (2.3, 0) -- (3.4,0);
    \draw[thick] (4.6, 0) -- (5.4,0);
    \draw[thick] (6.6, 0) -- (7.7,0);
    \draw[thick] (6, -0.3) -- (6,-1.7);

\end{tikzpicture}
\end{adjustbox}

Type $E_6$:
\begin{adjustbox}{valign=c}\begin{tikzpicture}[scale=0.8]
	\node at (0 cm,0) {$1$};
	\node at (2 cm,0) {$3$};
	\node at (4 cm,0) {$4$};
	\node at (6 cm,0) {$5$};
	\node at (8 cm,0) {$6$};
	\node at (4 cm,-2cm) {$2$};

    \draw[thick] (0 cm,0) circle (3 mm);
    \draw[thick] (2 cm,0) circle (3 mm);
    \draw[thick] (4 cm,0) circle (3 mm);
    \draw[thick] (6 cm,0) circle (3 mm);
    \draw[thick] (8 cm,0) circle (3 mm);
    \draw[thick] (4 cm,-2cm) circle (3 mm);    
    
    \draw[thick] (0.3, 0) -- (1.7,0);
    \draw[thick] (2.3, 0) -- (3.7,0);
    \draw[thick] (4.3, 0) -- (5.7,0);
    \draw[thick] (6.3, 0) -- (7.7,0);
    \draw[thick] (4, -0.3) -- (4,-1.7);
\end{tikzpicture}
\end{adjustbox}

Type $E_7$:
\begin{adjustbox}{valign=c}\begin{tikzpicture}[scale=0.8]
	\node at (0 cm,0) {$1$};
	\node at (2 cm,0) {$3$};
	\node at (4 cm,0) {$4$};
	\node at (6 cm,0) {$5$};
	\node at (8 cm,0) {$6$};
	\node at (10 cm,0) {$7$};
	\node at (4 cm,-2cm) {$2$};

    \draw[thick] (0 cm,0) circle (3 mm);
    \draw[thick] (2 cm,0) circle (3 mm);
    \draw[thick] (4 cm,0) circle (3 mm);
    \draw[thick] (6 cm,0) circle (3 mm);
    \draw[thick] (8 cm,0) circle (3 mm);
    \draw[thick] (10 cm,0) circle (3 mm);
    \draw[thick] (4 cm,-2cm) circle (3 mm);    
    
    \draw[thick] (0.3, 0) -- (1.7,0);
    \draw[thick] (2.3, 0) -- (3.7,0);
    \draw[thick] (4.3, 0) -- (5.7,0);
    \draw[thick] (6.3, 0) -- (7.7,0);
    \draw[thick] (8.3, 0) -- (9.7,0);
    \draw[thick] (4, -0.3) -- (4,-1.7);
\end{tikzpicture}
\end{adjustbox}

Type $E_8$:
\begin{adjustbox}{valign=c}
\begin{tikzpicture}[scale=0.8]
	\node at (0 cm,0) {$1$};
	\node at (2 cm,0) {$3$};
	\node at (4 cm,0) {$4$};
	\node at (6 cm,0) {$5$};
	\node at (8 cm,0) {$6$};
	\node at (10 cm,0) {$7$};
	\node at (12 cm,0) {$8$};
	\node at (4 cm,-2cm) {$2$};

    \draw[thick] (0 cm,0) circle (3 mm);
    \draw[thick] (2 cm,0) circle (3 mm);
    \draw[thick] (4 cm,0) circle (3 mm);
    \draw[thick] (6 cm,0) circle (3 mm);
    \draw[thick] (8 cm,0) circle (3 mm);
    \draw[thick] (10 cm,0) circle (3 mm);
    \draw[thick] (12 cm,0) circle (3 mm);
    \draw[thick] (4 cm,-2cm) circle (3 mm);    
    
    \draw[thick] (0.3, 0) -- (1.7,0);
    \draw[thick] (2.3, 0) -- (3.7,0);
    \draw[thick] (4.3, 0) -- (5.7,0);
    \draw[thick] (6.3, 0) -- (7.7,0);
    \draw[thick] (8.3, 0) -- (9.7,0);
    \draw[thick] (10.3, 0) -- (11.7,0);
    \draw[thick] (4, -0.3) -- (4,-1.7);
\end{tikzpicture}
\end{adjustbox}

Type $F_4$:
\begin{adjustbox}{valign=c}
\begin{tikzpicture}[scale=0.8]
	\node at (0 cm,0) {$1$};
	\node at (2 cm,0) {$2$};
	\node at (4 cm,0) {$3$};
	\node at (6 cm,0) {$4$};

    \draw[thick] (0 cm,0) circle (3 mm);
    \draw[thick] (2 cm,0) circle (3 mm);
    \draw[thick] (4 cm,0) circle (3 mm);
    \draw[thick] (6 cm,0) circle (3 mm);
    
    \draw[thick] (0.3, 0) -- (1.7,0);
    \begin{scope}[decoration={markings,mark=at position 0.6 with {\arrow{>[scale=2.5,
          length=2.5,
          width=3]}}}] 
    	\draw[postaction={decorate},thick,double,] (2.3, 0) -- (3.7,0);
    \end{scope}
    \draw[thick] (4.3, 0) -- (5.7,0);
\end{tikzpicture}
\end{adjustbox}

Type $G_2$:
\begin{adjustbox}{valign=c}
\begin{tikzpicture}[scale=0.8]
	\node at (0 cm,0) {$1$};
	\node at (2 cm,0) {$2$};

    \draw[thick] (0 cm,0) circle (3 mm);
    \draw[thick] (2 cm,0) circle (3 mm);
    
    \begin{scope}[decoration={markings,mark=at position 0.6 with {\arrow{<[scale=2.5,
          length=2.5,
          width=3]}}}] 
    	\draw[postaction={decorate},thick,triple={[line width=0.25mm,black] in
      [line width=0.6mm,white] in
      [line width=1.1mm,black]}] (0.3, 0) -- (1.7,0);
    \end{scope}
\end{tikzpicture}
\end{adjustbox}
\end{center}

We define the minimal nilpotent orbit of $\g$ to be the nilpotent orbit of $\g$ which is minimal among non-trivial nilpotent orbits where the order is given by $\O \leq \O' \Leftrightarrow \O \subset \overline{\O'}$. Also we define the minimal special nilpotent orbit of $\g$ to be the nilpotent orbit which is minimal among non-trivial special nilpotent orbits with the same order. (Here the term ``special" is in the sense of Lusztig.) These orbits are always well-defined and coincide if and only if $G$ is of simply-laced type, i.e. type $A$, $D$, $E$. Here we summarize some properties of these orbits and corresponding Springer fibers. These can be read from subsequent sections of this paper, \cite{dg:minimal}, \cite{carter:book}, \cite{cm:nilpotent}, etc.
\begin{center}
\begin{tabular}{|c|c|c|c|c|c|}
\hline
Type&$\nn \in \g$&M/MS&$\dim \B_\nn$& $|\Irr \B_\nn|$&$C(\nn)$
\\\hline$A_n$&$(1^{n-1}2^1)$&M=MS&$\frac{1}{2}(n^2-n)$&$n$&1
\\\hline\multirow{2}{*}{$B_n$}&$(1^{2n-3}2^2)$&M&$n^2-2n+2$&$n-1$&1
\\\cline{2-6}&$(1^{2n-2}3^1)$&MS&$n^2-2n+1$&$n+1$&$\Z/2$
\\\hline\multirow{2}{*}{$C_n$}&$(1^{2n-2}2^1)$&M&$n^2-n$&1&1
\\\cline{2-6}&$(1^{2n-4}2^2)$&MS&$n^2-2n+1$&$2n-1$&$\Z/2$
\\\hline$D_n$&$(1^{2n-4}2^2)$&M=MS&$n^2-3n+3$&$n$&1
\\\hline$E_6$&$A_1$&M=MS&25&$6$&1
\\\hline$E_7$&$A_1$&M=MS&46&$7$&1
\\\hline$E_8$&$A_1$&M=MS&91&$8$&1
\\\hline\multirow{2}{*}{$F_4$}&$A_1$&M&16&2&1
\\\cline{2-6}&$\widetilde{A_1}$&MS&13&6&$\Z/2$
\\\hline\multirow{2}{*}{$G_2$}&$A_1$&M&$3$&1&1
\\\cline{2-6}&$G_2(a_1)$&MS&$1$&4&$S_3$
\\\hline
\end{tabular}
\end{center}
Here each column represents:
\begin{enumerate}[$\bullet$]
\item Type: the type of $G$
\item $\nn \in \g$: if $G$ is of classical type, then it means the Jordan type of $\nn$. Otherwise, it is the type of the distinguished parabolic subgroup attached to $\nn$ with respect to the Bala-Carter classification.
\item M/MS: ``M'' if $\nn$ is in the minimal nilpotent orbit, ``MS'' if $\nn$ is in the minimal special nilpotent orbit, and ``M=MS'' if $\nn$ is in the minimal nilpotent orbit which is also special.
\item $\dim \B_\nn$: dimension of the Springer fiber of $\nn$
\item $|\Irr \B_\nn|$: the number of irreducible components of $\B_\nn$
\item $C(\nn)$: the component group of the stabilizer of $\nn$ in $G$ when $G$ is simple of adjoint type\footnote{It is known that the action of the component group of the stabilizer of $\nn$ on $\Gamma_\nn$ factors through that in the case when $G$ is of adjoint type, thus it suffices to consider $C(\nn)$ for adjoint $G$.}
\end{enumerate}

In this paper, we are mainly interested in the following properties.
\begin{enumerate}[$(1)$]
\item explicit descriptions of each irreducible component of $\B_\nn$
\item whether all irreducible components of $\B_\nn$ are smooth or not
\item $I_X$ for each $X \in \Irr \B_\nn$
\item description of $\Gamma_\nn$ and the action of $C(\nn)$ on $\Gamma_\nn$
\end{enumerate}

\section{Classical types}
In this section we assume that $G$ is of classical type and analyze $\B_\nn$ when $\nn \in \g$ is contained in either the minimal or minimal special nilpotent orbit. Note that for minimal nilpotent orbits it is already studied in \cite{dg:minimal}. Thus here we skip proofs which can be read in \cite{dg:minimal}, or if it follows from direct calculation, unless nontrivial observation is needed.

\subsection{Type $A$} \label{sec:typeA}
Assume $G=SL_{n+1}(\k)$ which is of type $A_{n}$. Then there is a one-to-one correspondence between nilpotent orbits of $\g$ and partitions of $n+1$ given by taking the sizes of Jordan blocks of a nilpotent element. Under this correspondence the minimal nilpotent orbit is described by the partition $(1^{n-1}2^1)$ and it is also special. 

We regard $G$ as the set of linear automorphisms of determinant 1 acting on $\k^{n+1}$.  
Then the flag variety $\B$ is identified with the set of full flags in $\k^{n+1}$, i.e.
$$\B = \{ [F_i]=[F_0 \subset F_1 \subset \cdots \subset F_{n}\subset F_{n+1}=\k^{n+1}] \mid  \dim F_i = i\}.$$
Let $s_i\in S$ be chosen so that for any $[F_j] \in \B$ and $s_i \in S$, the line of type $s_i$ passing through $[F_j]$ is defined by 
$$\{[F'_j]\in \B \mid F_j=F_j' \textup{ for } j \neq i\}.$$
Then the labeling of elements in $S$ agrees with the Dynkin diagram in Section \ref{sec:prelim}.

Let $\nn\in \g$ be contained in the minimal nilpotent orbit of $\g$, regarded as an endomorphism of $\k^{n+1}$, of Jordan type $(1^{n-1}2^1)$. 
According to \cite{vargas} (see also \cite{fung}) its corresponding Springer fiber is described as follows. For $1 \leq i \leq n$, let $X_i$ be the closed subvariety of $\B$ defined by 
$$X_i \colonequals \{[F_j] \in \B \mid \im \nn \subset F_{i} \subset \ker \nn\}.$$
Then it is easy to check that
$$X_i:Gr(i-1, n-1) \dashleftarrow \B(GL_i) \times \B(GL_{n+1-i}).$$
Thus $X_i$ are smooth varieties of dimension $\frac{1}{2}(n^2-n)$. Also $\Irr(\B_\nn) = \{X_1, X_2, \cdots, X_n\}$. For $i \neq j$, $\codim_{\B_\nn} X_i\cap X_j = |j-i|$ by direct calculation. Therefore $\Gamma_\nn$ is described as follows.
\begin{center}
\begin{adjustbox}{valign=c}
\begin{tikzpicture}[scale=1]
	\node at (0 cm,0) {$X_1$};
	\node at (2 cm,0) {$X_2$};
	\node at (4 cm,0) {$\cdots$};
	\node at (6 cm,0) {$X_{n-1}$};
	\node at (8 cm,0) {$X_n$};
	
    \draw[thick, rounded corners=8] (5.4, -0.3) rectangle (6.6, 0.3) {};

    \draw[thick] (0 cm,0) circle (3 mm);
    \draw[thick] (2 cm,0) circle (3 mm);
    \draw[thick] (8 cm,0) circle (3 mm);
    
    \draw[thick] (0.3, 0) -- (1.7,0);
    \draw[thick] (2.3, 0) -- (3.4,0);
    \draw[thick] (4.6, 0) -- (5.4,0);
    \draw[thick] (6.6, 0) -- (7.7,0);

\end{tikzpicture}
\end{adjustbox}
\end{center}
It is easy to check that $X_i$ is the union of lines of type $s_j$ for $i\neq j$, but not the union of lines of type $s_i$. It follows that $I_{X_i} = S -\{s_i\}.$

\subsection{Type $B$}\label{sec:typeB}
Let $G=SO_{2n+1}(\k)$ $(n \geq 2, \ch \k \neq 2)$. 
We regard $G$ as the group of automorphisms of determinant 1 acting on $\k^{2n+1}$ equipped with a non-degenerate symmetric bilinear form. Then we may identify $\B$ with the set of full isotropic flags in $\k^{2n+1}$, i.e.
$$\B=\{[F_i]=[F_0 \subset F_1 \subset \cdots \subset F_n]\mid \dim F_i = i, F_i \subset \k^{2n+1}\textup{ is isotropic} \}.$$ 

Let $s_i \in S$ be defined so that for any $[F_j] \in \B$ and $s_i \in S$, the line of type $s_i$ passing through $[F_j]$ is defined by 
$$\{[F'_j]\in \B \mid F_j=F_j' \textup{ for } j \neq i\}.$$
Then the labeling of elements in $S$ agrees with the Dynkin diagram in Section \ref{sec:prelim}.

Let $\nn \in \g$ be contained in the minimal nilpotent orbit of $\g$, which has Jordan type $(1^{2n-3}2^2)$.
We define $Y_i$ for $1 \leq i \leq n-1$ as follows.
$$Y_i \colonequals \{[F_j] \in \B \mid \im \nn \not\subset F_i, \textup{there exists a line } \vec{l} \subset \im \nn \textup{ such that }\vec{l}  \subset F_i \subset \ker \nn\}
$$
Also let $X_i$ be the closure of $Y_i$, i.e. $X_i \colonequals \overline{Y_i}$. Then 
\begin{gather*}
Y_1 : \P^1 \dashleftarrow \B(SO_{2n-1}),
\\Y_i : \P^1 \dashleftarrow OGr(i-2,2n-3) \dashleftarrow U_i \dashleftarrow \B(GL_i) \times \B(SO_{2n-2i+1}) \textup{ if } i \neq 1.
\end{gather*}
Here $U_i$ is a certain irreducible open subset of $Gr(i-2, i-1)$. Thus $Y_i$ are irreducible smooth varieties of dimension $n^2-2n+2$ and $X_i$ are irreducible of the same dimension. Also $\Irr(\B_\nn) = \{X_1, X_2, \cdots, X_{n-1}\}$. For $i \neq j$, $\codim_{\B_\nn} X_i\cap X_j = 1$ if and only if $|i-j|=1$ by \cite{dg:minimal}. Thus $\Gamma_\nn$ is described as follows.
\begin{center}
\begin{adjustbox}{valign=c}
\begin{tikzpicture}[scale=1]
	\node at (0 cm,0) {$X_1$};
	\node at (2 cm,0) {$X_2$};
	\node at (4 cm,0) {$\cdots$};
	\node at (6 cm,0) {$X_{n-1}$};
	
    \draw[thick, rounded corners=8] (5.4, -0.3) rectangle (6.6, 0.3) {};

    \draw[thick] (0 cm,0) circle (3 mm);
    \draw[thick] (2 cm,0) circle (3 mm);
    
    \draw[thick] (0.3, 0) -- (1.7,0);
    \draw[thick] (2.3, 0) -- (3.4,0);
    \draw[thick] (4.6, 0) -- (5.4,0);

\end{tikzpicture}
\end{adjustbox}
\end{center}
It is easy to check that $X_i$ is the union of lines of type $s_j$ for $i\neq j$, but not the union of lines of type $s_i$. It follows that $I_{X_i} = S -\{s_i\}.$

\begin{rmk} Here $Y_1 = X_1$, but $Y_i \subsetneq X_i$ for $i \neq 1$. Indeed, instead we may try to define
$$X_i \colonequals \{[F_j] \in \B \mid \textup{there exists a line } \vec{l} \subset \im \nn \textup{ such that } \vec{l}  \subset F_i \subset \ker \nn\}
$$
and argue as in Section \ref{sec:typeA}. However, then at some point one needs to examine the structure of the Grassmannian of isotropic $k'$-dimensional subspaces in $\k^{n'}$ for some $ k'\leq n'$, where $\k^{n'}$ is equipped with a ``degenerate'' symmetric bilinear form. Thus it is cumbersome to determine whether such $X_i$ are even irreducible.

Furthermore, in general not all the irreducible components are smooth; for example, if $G=SO_{7}$ we may show that 
$$\B_\nn \simeq \scvar{s(31232)} \cup \scvar{s(31231)}$$
 using the method of \cite{dg:minimal}. However, the Kazhdan-Lusztig polynomial $P_{id, s(31231)}(q) = q+1$, thus $\scvar{s(31231)}$ is not even rationally smooth. (cf. Lemma \ref{lem:smooth})
\end{rmk}

This time we let $\nn \in \g$ be contained in the minimal special nilpotent orbit of $\g$, which has Jordan type $(1^{2n-2}3^1)$.
For $1 \leq i \leq n$ define
$$X_i \colonequals \{ [F_j] \in \B \mid \im \nn^2 \subset F_i \subset \ker \nn\}.$$
Then
$$X_i : OGr(i-1,2n-2) \dashleftarrow \B(GL_i) \times \B(SO_{2n-2i+1}).$$
Thus $X_i$ are smooth varieties of dimension $n^2-2n+1$, and irreducible except when $i=n$, in which case $X_n$ is the union of two irreducible components of $\B_\nn$, denoted $X_n', X_n''$. Then $\Irr(\B_\nn) = \{X_1, X_2, \cdots, X_{n-1}, X_n', X_n''\}$. For $i \neq j < n$, $\codim_{\B_\nn} X_i \cap X_j = |i-j|$, $\codim_{\B_\nn} X_i \cap X_n' = \codim_{\B_\nn} X_i \cap X_n'' = n-i,$ and $X_n' \cap X_n'' = \emptyset$, which can be verified by direct calculation. Thus $\Gamma_\nn$ is described as follows.
\begin{center}
\begin{adjustbox}{valign=c}
\begin{tikzpicture}[scale=1]
	\node at (0 cm,0) {$X_1$};
	\node at (2 cm,0) {$X_2$};
	\node at (4 cm,0) {$\cdots$};
	\node at (6 cm,0) {$X_{n-1}$};
	\node at (6 cm,-2) {$X_n'$};
	\node at (8 cm,0) {$X_n''$};
    \draw[thick, rounded corners=8] (5.4, -0.3) rectangle (6.6, 0.3) {};
    
    \draw[thick] (0 cm,0) circle (3 mm);
    \draw[thick] (2 cm,0) circle (3 mm);
    \draw[thick] (8 cm,0) circle (3 mm);
    \draw[thick] (6 cm,-2cm) circle (3 mm);
    
    \draw[thick] (0.3, 0) -- (1.7,0);
    \draw[thick] (2.3, 0) -- (3.4,0);
    \draw[thick] (4.6, 0) -- (5.4,0);
    \draw[thick] (6.6, 0) -- (7.7,0);
    \draw[thick] (6, -0.3) -- (6,-1.7);
    
\end{tikzpicture}
\end{adjustbox}
\end{center}
It is easily checked that $I_{X_i} = S- \{s_i\}$, $I_{X_n'} = I_{X_n''} = S-\{s_n\}$. The action of the nontrivial element in $C(\nn) \simeq \Z/2$ is given by the nontrivial automorphism of $\Gamma_\nn$ which permutes $X_n'$ and $X_n''$.

\subsection{Type $C$}\label{sec:typeC}
Let $G=Sp_{2n}(\k)$ $(n\geq 2, \ch \k \neq 2)$. 
We regard $G$ as the automorphism group of $\k^{2n}$ equipped with a non-degenerate skew-symmetric bilinear form. Then we may identify $\B$ with the set of full isotropic flags in $\k^{2n}$, i.e.
$$\B=\{[F_i]=[F_0 \subset F_1 \subset \cdots \subset F_n]\mid \dim F_i = i, F_i \subset \k^{2n}\textup{ is isotropic} \}.$$ 

Let $s_i\in S$ be chosen so that for any $[F_j] \in \B$ and $s_i \in S$, the line of type $s_i$ passing through $[F_j]$ is defined by 
$$\{[F'_j]\in \B \mid F_j=F_j' \textup{ for } j \neq i\}.$$
Then the labeling of elements in $S$ agrees with the Dynkin diagram in Section \ref{sec:prelim}.
%

Let $\nn \in \g$ be contained in the minimal nilpotent orbit of $\g$, which has Jordan type $(1^{2n-2}2^1)$. Then
%
$$\B_\nn = \{ [F_j] \in \B \mid \im \nn \subset F_n \subset \ker \nn\},$$
which is irreducible. Also we have
$$\B_\nn : SGr(n-1,2n-2) \dashleftarrow \B(GL_{n}),$$
thus $\B_\nn$ is smooth of dimension $n^2-n$. It follows that $\Gamma_\nn$ has only one vertex, i.e. 
\begin{center}
\begin{adjustbox}{valign=c}
\begin{tikzpicture}[scale=1]
	\node at (0 cm,0) {$\B_\nn$};
	\draw[thick] (0 cm,0) circle (3 mm);
\end{tikzpicture}
\end{adjustbox}
\end{center}
It is easy to see that $I_{\B_\nn} = S-\{s_n\}$.

This time let $\nn \in \g$ be contained in the minimal special nilpotent orbit of $\g$, which has Jordan type $(1^{2n-4}2^2)$. Then $\nn\cdot\{ x \in \k^{2n} \mid \br{x, \nn x}=0\}$
consists of two lines, say $\vec{l},  \vec{l}' \subset \k^{2n}$, which span the image of $N$.
%
Now we define
\begin{gather*}
X_i \colonequals \{[F_j]\in\B \mid \vec{l} \subset F_i \subset \ker \nn\}
\\X'_i \colonequals \{[F_j]\in\B \mid \vec{l}' \subset F_i \subset \ker \nn\}
\\X_n = X'_n \colonequals \{[F_j]\in\B \mid \im \nn \subset F_n \subset \ker \nn\}
\end{gather*}
Then
$$X_i, X_i' : SGr(i-1, 2n-3) \dashleftarrow \B(GL_i) \times \B(Sp_{2n-2i}).$$
(Here $SGr(i-1,2n-3)$ denotes the ``odd'' symplectic Grassmannian defined in \cite{mihai}.) Thus $X_i, X_i'$ are smooth of dimension $n^2-2n+1$. Also, $\Irr(\B_\nn) = \{X_1, \cdots, X_n=X_n'', X_1', \cdots, X_{n-1}'\}$. We have $\codim_{\B_\nn} X_i \cap X_j = |i-j|$ and $\codim_{\B_\nn} X_i \cap X_j' = 2n-i-j$ by direct calculation, thus $\Gamma_\nn$ is as follows.
\begin{center}
\begin{adjustbox}{valign=c}
\begin{tikzpicture}[scale=1]
	\node at (2 cm,0) {$X_1$};
	\node at (4 cm,0) {$\cdots$};
	\node at (6 cm,0) {$X_{n-1}$};
	\node at (8 cm,0) {$X_n=X_n'$};
	\node at (10 cm,0) {$X_{n-1}'$};
	\node at (12 cm,0) {$\cdots$};
	\node at (14 cm,0) {$X_1'$};
	
    \draw[thick, rounded corners=8] (5.4, -0.3) rectangle (6.6, 0.3) {};
    \draw[thick, rounded corners=8] (7.2, -0.3) rectangle (8.8, 0.3) {};
    \draw[thick, rounded corners=8] (9.4, -0.3) rectangle (10.6, 0.3) {};    

    \draw[thick] (2 cm,0) circle (3 mm);
    \draw[thick] (14 cm,0) circle (3 mm);
    
    \draw[thick] (2.3, 0) -- (3.4,0);
    \draw[thick] (4.6, 0) -- (5.4,0);
    \draw[thick] (6.6, 0) -- (7.2,0);
    \draw[thick] (8.8, 0) -- (9.4,0);
    \draw[thick] (10.6, 0) -- (11.4,0);
    \draw[thick] (12.6, 0) -- (13.7,0);    

\end{tikzpicture}
\end{adjustbox}
\end{center}
It is easy to check that $I_{X_i}=I_{X_i'}= S-\{s_i\}.$ The nontrivial element of $C(\nn) \simeq \Z/2$ acts on $\Gamma_\nn$ by permuting $X_i$ and $X_i'$.

\subsection{Type $D$} \label{sec:typeD}
Let $G=SO_{2n}(\k)$ $(n\geq 3, \ch \k \neq 2)$. 
We regard $G$ as the group of automorphisms with determinant 1 acting on $\k^{2n}$ equipped with a non-degenerate symmetric bilinear form. Then we may identify $\B$ with the set of certain isotropic flags in $\k^{2n}$, i.e.
$$\B=\{[F_i]=[F_0 \subset F_1 \subset \cdots\subset F_{n-1}]\mid \dim F_i = i, F_i \subset \k^{2n}\textup{ is isotropic} \}.$$ 

We let $s_i \in S$ such that they satisfy the following conditions. For any $[F_j] \in \B$ and $s_i \in S$ such that $1\leq i\leq n-2$, the line of type $s_i$ passing through $[F_j]$ is defined by 
$$\{[F'_j]\in \B \mid F_j=F_j' \textup{ for } j \neq i\}.$$
If $i=n-1$ or $n$, first note that there exist exactly two Lagrangian subspaces $F_{n,1},F_{n,2}\subset \k^{2n}$ which contain $F_{n-1}$. They can be labeled in a way that for any two flags $[F_j]$ and $[F'_j]$ and $g \in G$ such that $g\cdot [F_j] = [F'_j]$, we have $g\cdot F_{n,1} = F'_{n,1}$ and $g\cdot F_{n,2} = F'_{n,2}$.
Then the line of type $s_{n-1}$, $s_n$, respectively, passing through $[F_j]$ is defined by
\begin{gather*}
\{[F_j'] \in \B \mid F_j=F_j' \textup{ for } j \neq n-1, F_{n,1}=F_{n,1}'\}, \quad
\{[F_j'] \in \B \mid F_j=F_j' \textup{ for } j \neq n-1, F_{n,2}=F_{n,2}'\},
\end{gather*}
respectively. Note that such labeling of $s_i \in S$ agrees with the Dynkin diagram in Section \ref{sec:prelim}.

Let $\nn \in \g$ to be contained in the minimal nilpotent orbit of $\g$, which has Jordan type $(1^{2n-4}2^2)$. Then
For $1 \leq i \leq n-1$ we define
$$Y_i \colonequals \{[F_j] \in \B \mid \im \nn \not\subset F_i, \textup{there exists a line } \vec{l} \subset\im \nn \textup{ such that } \vec{l}  \subset F_i \subset \ker \nn\}
$$
Also let $X_i$ be the closure of $X_i$, i.e. $X_i \colonequals \overline{Y_i}$. Then 
\begin{gather*}
Y_1 : \P^1 \dashleftarrow \B(SO_{2n-2}),
\\Y_i : \P^1 \dashleftarrow OGr(i-1,2n-4) \dashleftarrow U_i \dashleftarrow \B(GL_i) \times \B(SO_{2n-2i}) \textup{ if } i \neq 1.
\end{gather*}
Here $U_i$ is a certain irreducible open subset of $Gr(i-1,i)$. Thus $Y_i$ are smooth varieties of dimension $n^2-3n+3$ and $X_i$ are of the same dimension. They are also irreducible except when $i=n-1$, in which case $X_{n-1}$ is a union of two irreducible components of $\B_\nn$, denoted $X_{n-1}'$ and $X_{n-1}''$. Then $\Irr \B_\nn = \{X_1, \cdots, X_{n-2}, X_{n-1}', X_{n-1}''\}$. For $i, j \leq n-2$, by \cite{dg:minimal} $\codim_{\B_\nn} X_i \cap X_j=1$ if and only if $|i-j|=1$. Also, $\codim_{\B_\nn} X_j \cap X_{n-1}'=1$ and/or $\codim_{\B_\nn} X_j \cap X_{n-1}''=1$ if and only if $j=n-2$. However, $\codim_{\B_\nn} X_{n-1}' \cap X_{n-1}'' >1$. Therefore $\Gamma_\nn$ is as follows.
\begin{center}
\begin{adjustbox}{valign=c}
\begin{tikzpicture}[scale=1]
	\node at (0 cm,0) {$X_1$};
	\node at (2 cm,0) {$X_2$};
	\node at (4 cm,0) {$\cdots$};
	\node at (6 cm,0) {$X_{n-2}$};
	\node at (6 cm,-2) {$X_{n-1}'$};
	\node at (8 cm,0) {$X_{n-1}''$};
    \draw[thick, rounded corners=8] (5.4, -0.3) rectangle (6.6, 0.3) {};
    \draw[thick, rounded corners=8] (7.4, -0.3) rectangle (8.6, 0.3) {};
    \draw[thick, rounded corners=8] (5.4, -2.3) rectangle (6.6, -1.7) {};
    
    \draw[thick] (0 cm,0) circle (3 mm);
    \draw[thick] (2 cm,0) circle (3 mm);
    
    \draw[thick] (0.3, 0) -- (1.7,0);
    \draw[thick] (2.3, 0) -- (3.4,0);
    \draw[thick] (4.6, 0) -- (5.4,0);
    \draw[thick] (6.6, 0) -- (7.4,0);
    \draw[thick] (6, -0.3) -- (6,-1.7);
    
\end{tikzpicture}
\end{adjustbox}
\end{center}
It is easy to check that $I_{X_{i}} = S- \{s_i\}$ for $1\leq i \leq n-2$, and $\{I_{X_{n-1}'}, I_{X_{n-1}''}\} = \{S-\{s_{n-1}\}, S-\{s_n\}\}$.

\begin{rmk} Here we have a similar issue as in the remark of Section \ref{sec:typeB}, thus the argument here is also similar to it. Furthermore, in general $X_i$ are not smooth; for example, if $G=SO_{8}$ we may show that 
$$\B_\nn \simeq \scvar{s(3124232)} \cup \scvar{s(4123121)} \cup \scvar{s(3124231)} \cup \scvar{s(3124121)}$$
 using the method of \cite{dg:minimal}. However, the Kazhdan-Lusztig polynomial $P_{id, s(3124231)}(q) = 2q+1$, thus $\scvar{s(3124231)}$ is not even rationally smooth. (cf. Lemma \ref{lem:smooth})
\end{rmk}

\section{Exceptional types}
For exceptional types, it is more difficult to describe each irreducible component of $\B_\nn$ as explicitly as classical types. Instead, in this section we choose a representative $\nn \in \g$ carefully and study the intersection of $\B_\nn$ with each (open) Schubert cell. From now on, we let $\lambda \in \Phi^+$ be the highest root and $\mu \in \Phi^+$ be the highest root among short roots if $\Phi$ have roots of two different lengths. We start with the following observation. 

\begin{lem}\label{lem:choice}  If $\nn \in \g$ is a nontrivial element in $X_\lambda \subset \n$, then $\nn$ is contained in the minimal nilpotent orbit of $\g$. If $G$ is not of type $G_2$ and $\nn \in \g$ is a nontrivial element in $X_\mu \subset \n$, then $\nn$ is contained in the minimal special nilpotent orbit of $\g$.
\end{lem}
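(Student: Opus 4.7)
The plan is to first observe that for any root $\gamma \in \Phi$, the torus $T$ acts on the one-dimensional root space $X_\gamma$ through the nontrivial character $\gamma: T \to \G_m$, which is surjective. Hence $T$ acts transitively on $X_\gamma \setminus \{0\}$, and in particular all nonzero vectors in $X_\gamma$ lie in a single $G$-orbit. This reduces both statements to identifying one orbit in each case.

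For the first statement, let $\O_\lambda \colonequals G\cdot \nn$ for any nonzero $\nn \in X_\lambda$. To show $\O_\lambda$ is the minimal nilpotent orbit, I would verify $\O_\lambda \subset \overline{\O'}$ for every nontrivial nilpotent orbit $\O'$. Since every nilpotent element of $\g$ is $G$-conjugate into $\n$, the intersection $\overline{\O'} \cap \n$ is a nonempty $B$-stable closed subvariety of $\n$ meeting $\n \setminus \{0\}$. I would then invoke the standard fact that any such subvariety contains $X_\lambda \setminus \{0\}$: for $v \in \n$ nonzero, one uses the cocharacter $\eta: \G_m \to T$ assigning weight $\mathrm{ht}(\beta)$ to $X_\beta$ to project to the top-height component of $v$, and then applies successive $U_{\alpha_i}$-actions, using that every positive root is connected to $\lambda$ by a chain of simple-root additions inside $\Phi^+$, together with the centrality of $X_\lambda$ in $\n$ (which makes the resulting flow stabilize inside $X_\lambda$).

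For the second statement, the same transitivity gives a single orbit $\O_\mu$, and the task is to identify $\O_\mu$ with the minimal special nilpotent orbit recorded in the table of Section \ref{sec:prelim} for each non-$G_2$ type. In types $B_n$ and $C_n$ (with $\mu = e_1$ and $e_1 + e_2$ respectively), I would compute the Jordan type of $\nn$ acting on the defining representation of $SO_{2n+1}$, resp.\ $Sp_{2n}$, and match with $(1^{2n-2}3^1)$, resp.\ $(1^{2n-4}2^2)$. In type $F_4$, I would identify the orbit via its weighted Dynkin diagram or its dimension, both determined by a choice of $\mathfrak{sl}_2$-triple containing $\nn$; the resulting invariants match the Bala-Carter label $\widetilde{A_1}$.

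The main technical obstacle is the combinatorial step showing that $X_\lambda$ lies in every nonzero $B$-stable closed subvariety of $\n$; this is well known but needs an explicit argument combining the height grading with the $U_{\alpha_i}$-actions. The exclusion of $G_2$ in the second statement is justified a posteriori by the fact that in $G_2$ the highest short root vector generates the orbit $\widetilde{A_1}$, which is not special, whereas the minimal special orbit is $G_2(a_1)$; a dimension count (or a direct verification that the $\widetilde{A_1}$ orbit is not in the image of the Lusztig-Spaltenstein map) confirms the distinction.
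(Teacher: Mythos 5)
Your proposal is correct and is substantially more detailed than the paper's own proof, which simply states that the lemma can be checked case-by-case and refers to Dolgachev--Goldstein for the minimal orbit. For the first statement you supply a uniform argument: the closure of any nontrivial nilpotent orbit meets $\n$ in a nonzero closed $B$-stable cone, and any such cone contains $X_\lambda$. This is the standard characterization of the minimal orbit as the orbit of a highest root vector, and the mechanism you describe---isolate a homogeneous component via the height cocharacter, then raise it with $U_{\alpha_i}$-actions---is the right one. Two small corrections to its formulation: (i) when isolating a height-homogeneous component you pass to a rescaled limit of $\eta(t)\cdot v$, so you need the cone property of $\overline{\O'}$ (which does hold for nilpotent orbit closures in good characteristic), not merely that $\overline{\O'}\cap\n$ is closed and $T$-stable; (ii) the iteration terminates in $X_\lambda$ simply because $\lambda$ is the unique positive root of maximal height, so the appeal to centrality of $X_\lambda$ in $\n$, while true, is not the operative point. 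For the second statement both you and the paper reduce to a case-by-case check; your Jordan-type computations in types $B_n$ and $C_n$ and the Bala--Carter label $\widetilde{A_1}$ in $F_4$ are correct, the simply-laced types are covered because there $\mu=\lambda$, and your a posteriori explanation for excluding $G_2$ (the highest short root vector lies in $\widetilde{A_1}$, which is not special, while the minimal special orbit is $G_2(a_1)$) is exactly right.
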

\begin{proof} This can be checked case-by-case: see \cite{dg:minimal} for minimal nilpotent ones.
\end{proof}

Thus we may assume either $\nn \in X_\lambda$ or $\nn \in X_\mu$. Then the following lemma is our main tool.
\begin{lem}\label{lem:explicit} We keep the assumptions and notations in Lemma \ref{lem:choice}.
\begin{enumerate}
\item If $\nn \in \g$ is a nontrivial element in $X_\lambda \subset \n$, then $\B_\nn \cap \S_w \neq \emptyset$ if and only if $w^{-1}(\lambda)>0$. If so, then $\B_\nn \cap \S_w= \S_w$.
\item If $\nn \in \g$ is a nontrivial element in $X_\mu \subset \n$, then $\B_\nn \cap \S_w \neq \emptyset$ if and only if $w^{-1}(\mu)>0$. If so, then $\B_\nn \cap \S_w = 
\left(\prod_{\alpha} U_\alpha\right) w B/B$, where the product is over all the roots $\alpha \in R^+$ such that 
$$w^{-1}(\alpha)<0, \text{ and either } \alpha+\mu \notin \Phi \text{ or } w^{-1}(\alpha+\mu)>0.$$
\end{enumerate}
\end{lem}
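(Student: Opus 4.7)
The approach is to parametrize each Schubert cell explicitly and directly impose the Springer fiber condition. Every point of $\S_w = BwB/B$ can be written uniquely as $uwB/B$ with $u$ ranging over $U_w' \colonequals \prod_{\alpha \in \Phi^+,\, w^{-1}\alpha < 0} U_\alpha$ (for a chosen ordering of the factors), and the condition $uwB/B \in \B_\nn$ reads $\Ad(w^{-1}u^{-1})(\nn) \in \b$. I would parametrize $u = \prod_\alpha u_\alpha(t_\alpha)$, expand
\[
\Ad(u^{-1})(\nn) \;=\; \exp(-\ad X)(\nn) \;=\; \sum_{k \geq 0} \frac{(-1)^k}{k!}(\ad X)^k(\nn)
\]
for $X = \log u \in \n$ via the root-space decomposition, then apply $\Ad(w^{-1})$ and require each component in $\bigoplus_{\gamma < 0} X_\gamma$ to vanish.

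For part (1), take $\nn \in X_\lambda$ with $\lambda$ the highest root. For every $\alpha \in \Phi^+$ one has $\alpha + \lambda \notin \Phi$, so $[\n, X_\lambda] = 0$ and $\Ad(u^{-1})(\nn) = \nn$ holds identically in $u$. The condition therefore reduces to $\Ad(w^{-1})(\nn) \in X_{w^{-1}\lambda} \cap \b$, i.e.\ $w^{-1}\lambda > 0$, in which case every $u \in U_w'$ is admissible and $\B_\nn \cap \S_w = \S_w$.

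For part (2), take $\nn \in X_\mu$ with $\mu$ the highest short root. The essential structural input is that $[X_\alpha, X_\mu] = X_{\alpha + \mu}$ if $\alpha + \mu \in \Phi$, and vanishes otherwise; moreover, if $\alpha \in \Phi^+$ and $\alpha + \mu \in \Phi$, then $\alpha + \mu$ is necessarily \emph{long}, because $\mu$ is the highest short. At leading order in $\mathbf{t}$,
\[
\Ad(u^{-1})(\nn) \;=\; \nn \;-\; \sum_{\alpha \in \Phi^+,\, w^{-1}\alpha < 0,\, \alpha + \mu \in \Phi} t_\alpha\, c_{\alpha,\mu}\, E_{\alpha+\mu} \;+\; O(\mathbf{t}^2),
\]
with structure constants $c_{\alpha,\mu} \in \k^\times$. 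Since $\Ad(w^{-1})$ sends each root vector $E_\nu$ to a nonzero scalar multiple of $E_{w^{-1}\nu}$, the condition of lying in $\b$ first forces $w^{-1}\mu > 0$ (from the constant term) and then, for every $\alpha$ with $\alpha + \mu \in \Phi$ and $w^{-1}(\alpha + \mu) < 0$, a polynomial equation whose leading linear part is $t_\alpha$. With the ordering of the factors of $U_w'$ chosen so that the roots $\alpha$ responsible for these constraints are handled last, the $O(\mathbf{t}^2)$ cross terms can be absorbed into a triangular change of variables, leaving exactly the linear conditions $t_\alpha = 0$ for the forbidden $\alpha$'s. The remaining free parameters---those $\alpha$ with $w^{-1}\alpha < 0$ satisfying $\alpha + \mu \notin \Phi$ or $w^{-1}(\alpha + \mu) > 0$---then parametrize $\B_\nn \cap \S_w$ as the asserted product of root subgroups.

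The principal technical obstacle in part (2) is the treatment of the higher-order terms $(\ad X)^k(\nn)$ for $k \geq 2$. In types $B_n$, $C_n$, and $F_4$ one does encounter triples with $\beta + (\alpha + \mu) \in \Phi$ for $\alpha, \beta \in \Phi^+$, producing genuine quadratic contributions; absorbing these requires a case-by-case inspection of the root system together with a judicious choice of ordering (equivalently, of parametrization of $U_w'$). The analogous analysis breaks down in type $G_2$---where $\mu$ sits in longer root strings---which is precisely why the lemma excludes that case.
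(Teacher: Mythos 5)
Your part (1) is complete and correct: since $\lambda$ is the highest root, $[\n,X_\lambda]=0$, so $\Ad(u^{-1})\nn=\nn$ for every $u\in U$ and the condition collapses to $w^{-1}(\lambda)>0$, in which case the whole cell is in the fiber.

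For part (2), your overall strategy — parametrize the Schubert cell by $U_w'=\prod_{w^{-1}\alpha<0}U_\alpha$, expand $\Ad(u^{-1})\nn$ root-space by root-space, and read off the vanishing constraints — is essentially the same computation that underlies the paper's cited source (Springer, \emph{Trig. sums}, 7.10--7.14); the paper gives no independent proof, only this citation together with the remark that Springer's 7.11 contains an error which must be corrected. That remark already warns you that the delicate part of this lemma is precisely the part you leave unresolved. Your claim that ``the $O(\mathbf{t}^2)$ cross terms can be absorbed into a triangular change of variables, leaving exactly the linear conditions $t_\alpha=0$'' is asserted, not proved, and it is not a formality. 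Concretely, in type $B_3$ with $\mu=e_1$, take $w$ with $w^{-1}e_1=e_2$, $w^{-1}e_2=-e_1$, $w^{-1}e_3=-e_3$. Then $\alpha=e_3$ and $\beta=e_2-e_3$ both lie in the ``free'' set $S_1$ (since $w^{-1}(e_1+e_3)>0$ and $(e_2-e_3)+\mu\notin\Phi$), while $\alpha+\beta=e_2$ lies in the ``constrained'' set $S_2$ (since $w^{-1}(e_1+e_2)<0$). Expanding, the coefficient of $E_{e_1+e_2}$ in $\Ad(u^{-1})\nn$ is of the form $c_1\,t_{e_3}t_{e_2-e_3}-c_2\,t_{e_2}$ with $c_1,c_2\neq 0$, so the vanishing locus is $t_{e_2}=c\,t_{e_3}t_{e_2-e_3}$, \emph{not} $t_{e_2}=0$. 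That this locus nonetheless coincides with $(\prod_{\alpha\in S_1}U_\alpha)wB/B$ depends on a sign identity between structure constants (via the Jacobi identity and the Chevalley commutator formula) together with a correctly chosen ordering of the factors; it is exactly the point where one must be careful (and where Springer's original argument apparently slipped). You acknowledge that ``case-by-case inspection'' is needed, but you do not carry it out, so the heart of part (2) remains a gap. To complete the proof you would need either (i) to verify, for $B_n$, $C_n$, and $F_4$, that every quadratic cross-term $t_\alpha t_\beta$ entering a constrained coefficient $c_{\mu+\alpha+\beta}$ can be matched against $t_{\alpha+\beta}$ by a suitable ordering of the factor groups $U_\alpha$ (using the Chevalley commutator relations to show the parametrized solution set is literally the product $\prod_{\alpha\in S_1}U_\alpha$), or (ii) to import the corrected version of Springer's 7.11 as the paper does.
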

\begin{proof}
This is basically \cite[7.10--7.14]{springer:trig} with a minor correction, which fixes an error in the proof of \cite[Proposition 7.11]{springer:trig}.
\end{proof}

Also to check the smoothness of irreducible components we use the following lemma.
\begin{lem}\label{lem:smooth} For $w\in W$, the Schubert variety $\scvar{w}$ is rationally smooth if and only if the Kazhdan-Lusztig polynomial $P_{id, w}(q)$ is equal to 1 if and only if the Poincar{\'e} polynomial of $\scvar{w}$ is palindromic. Furthermore, if $G$ is of simply-laced type, $\scvar{w}$ is rationally smooth if and only if it is smooth.
\end{lem}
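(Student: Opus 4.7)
The lemma bundles three classical theorems about Schubert varieties, and rather than reprove any of them from scratch, my plan is to assemble the appropriate references and explain how they fit together. The substance of the work has already been done in the literature, so the proof is essentially a matter of bookkeeping.

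First I would address the equivalence between rational smoothness of $\scvar{w}$ and $P_{id,w}(q)=1$. This is the original Kazhdan--Lusztig criterion, which comes from the identification of $P_{u,w}(q)$ with the Poincaré polynomial of the local intersection cohomology stalk of $\scvar{w}$ at the $T$-fixed point corresponding to $u$. Rational smoothness at $u$ is by definition the statement that this local IC stalk collapses to $\mathbb{Q}_\ell$, i.e.\ $P_{u,w}(q)=1$. To promote rational smoothness at the $T$-fixed points to rational smoothness everywhere, one invokes the standard $T$-equivariant propagation argument using the Białynicki--Birula cells, and the vanishing of $P_{u,w}$ for all $u \leq w$ is equivalent to $P_{id,w}(q)=1$ by monotonicity in the Bruhat order.

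Second, for the equivalence between rational smoothness and palindromicity of the Poincaré polynomial of $\scvar{w}$, I would appeal to the Carrell--Peterson criterion: one direction is immediate because a rationally smooth compact complex variety satisfies rational Poincaré duality, and the Poincaré polynomial of such a variety is automatically palindromic; the converse, which is the substantive direction, uses the particular structure of the Bruhat paving of $\scvar{w}$ by Schubert cells to show that palindromicity forces rational Poincaré duality, which for this class of varieties is in turn equivalent to rational smoothness.

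Third, for the simply-laced statement, I would cite the theorem of Carrell--Kuttler (with related work by Peterson and Deodhar): in simply-laced type, the Zariski tangent space to $\scvar{w}$ at the $T$-fixed point $id$ has dimension equal to $\#\{\alpha \in \Phi^+ : s_\alpha \leq w\}$, and under rational smoothness this count is forced to equal $\ell(w) = \dim \scvar{w}$, so the tangent space has the expected dimension and smoothness at $id$ follows. $T$-equivariance then spreads smoothness from $id$ to the whole variety. I do not anticipate any real obstacle here; the only minor point is to confirm that all three criteria hold over an algebraically closed field of good characteristic, but each of them is well known to be characteristic-free, so this is automatic.
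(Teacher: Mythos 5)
Your proposal is correct and takes essentially the same approach as the paper, which simply cites Kazhdan--Lusztig and Carrell--Deodhar for the equivalence of rational smoothness, $P_{id,w}(q)=1$, and palindromicity of the Poincar\'e polynomial, and cites Carrell--Kuttler (Peterson's ADE theorem) for the simply-laced case. You supply a fuller sketch of why each cited theorem does what it is claimed to do, but the substance is the same appeal to the literature.
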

\begin{proof} The first part is a consequence of \cite{kl:schubert} and \cite{carrell:deodhar}, and the second part is proved in \cite{ck:peterson}.
\end{proof}

\subsection{Type $E_6$} If we let $\nn \in \g$ be a nontrivial element in $X_\lambda \subset \n$, then $\nn$ is contained in the minimal nilpotent orbit which is also special. In this case we use Lemma \ref{lem:explicit} to calculate $\Gamma_\nn$ directly, which is also described in \cite{dg:minimal}. By direct calculation, we obtain the following.

\begin{center}
\begin{tikzpicture}[scale=1]
	\node at (0 cm,0) {$\scvar{w_1}$};
	\node at (2 cm,0) {$\scvar{w_3}$};
	\node at (4 cm,0) {$\scvar{w_4}$};
	\node at (6 cm,0) {$\scvar{w_5}$};
	\node at (8 cm,0) {$\scvar{w_6}$};
	\node at (4 cm,-2cm) {$\scvar{w_2}$};

    \draw[thick, rounded corners=8] (-0.6, -0.3) rectangle (0.6, 0.3) {};
    \draw[thick, rounded corners=8] (1.4, -0.3) rectangle (2.6, 0.3) {};
    \draw[thick, rounded corners=8] (3.4, -0.3) rectangle (4.6, 0.3) {};
    \draw[thick, rounded corners=8] (5.4, -0.3) rectangle (6.6, 0.3) {};
    \draw[thick, rounded corners=8] (7.4, -0.3) rectangle (8.6, 0.3) {};
    \draw[thick, rounded corners=8] (3.4, -2.3) rectangle (4.6, -1.7) {};
    
    \draw[thick] (0.6, 0) -- (1.4,0);
    \draw[thick] (2.6, 0) -- (3.4,0);
    \draw[thick] (4.6, 0) -- (5.4,0);
    \draw[thick] (6.6, 0) -- (7.4,0);
    \draw[thick] (4, -0.3) -- (4,-1.7);
\end{tikzpicture}
\end{center}
Here each $w_i$ is as follows.
\begin{gather*}
w_1=s(5645341324565413245432432)
\\w_2=s(5645341324565413245134131)
\\w_3=s(5645341324565413245432421)
\\w_4=s(5645341324565413245341321)
\\w_5=s(5645341324562453413241321)
\\w_6=s(1345624534132453413241321)
\end{gather*}
Also, $I_{i} = S-\{\alpha_i\}$. The Kazhdan-Lusztig polynomial $P_{id, w_i}(q)$ for each $w_i$ is as follows.
\begin{align*}
&P_{id,w_1}(q)=1, &&P_{id,w_2}(q)=q^3+q^2+1, &&P_{id,w_3}(q)=q^2+q+1
\\&P_{id,w_4}(q)=q^3+2q^2+2q+1, &&P_{id,w_5}(q)=q^2+q+1, &&P_{id,w_6}(q)=1
\end{align*}
Thus by Lemma \ref{lem:smooth} $\scvar{w_1}$ and $\scvar{w_6}$ are smooth, but others are not.

\subsection{Type $E_7$} We argue the same as in type $E_6$ and obtain the following.
\begin{center}
\begin{tikzpicture}[scale=1]
	\node at (0 cm,0) {$\scvar{w_1}$};
	\node at (2 cm,0) {$\scvar{w_3}$};
	\node at (4 cm,0) {$\scvar{w_4}$};
	\node at (6 cm,0) {$\scvar{w_5}$};
	\node at (8 cm,0) {$\scvar{w_6}$};
	\node at (10 cm,0) {$\scvar{w_7}$};
	\node at (4 cm,-2cm) {$\scvar{w_2}$};

    \draw[thick, rounded corners=8] (-0.6, -0.3) rectangle (0.6, 0.3) {};
    \draw[thick, rounded corners=8] (1.4, -0.3) rectangle (2.6, 0.3) {};
    \draw[thick, rounded corners=8] (3.4, -0.3) rectangle (4.6, 0.3) {};
    \draw[thick, rounded corners=8] (5.4, -0.3) rectangle (6.6, 0.3) {};
    \draw[thick, rounded corners=8] (7.4, -0.3) rectangle (8.6, 0.3) {};
    \draw[thick, rounded corners=8] (9.4, -0.3) rectangle (10.6, 0.3) {};
    \draw[thick, rounded corners=8] (3.4, -2.3) rectangle (4.6, -1.7) {};
    
    \draw[thick] (0.6, 0) -- (1.4,0);
    \draw[thick] (2.6, 0) -- (3.4,0);
    \draw[thick] (4.6, 0) -- (5.4,0);
    \draw[thick] (6.6, 0) -- (7.4,0);
    \draw[thick] (8.6, 0) -- (9.4,0);
    \draw[thick] (4, -0.3) -- (4,-1.7);
\end{tikzpicture}
\end{center}
Here each $w_i$ is as follows.
\begin{gather*}
w_1=s(2456734562453413245675645341324565413245432432)
\\w_2=s(2456734562453413245675645341324565413245134131)
\\w_3=s(2456734562453413245675645341324565413245432421)
\\w_4=s(2456734562453413245675645341324565413245341321)
\\w_5=s(2456734562453413245675645341324562453413241321)
\\w_6=s(2456734562453413245671345624534132453413241321)
\\w_7=s(7654324567134562453413245624534132453413241321)
\end{gather*}
Also $I_{w_i} = S-\{\alpha_i\}$. The Kazhdan-Lusztig polynomial $P_{id, w_i}(q)$ for each $w_i$ is as follows.
\begin{align*}
&P_{id,w_1}(q)=q^5+q^3+1, &&P_{id,w_2}(q)=q^3+q^2+1, 
\\&P_{id,w_3}(q)=q^5+q^4+q^3+q^2+q+1, &&P_{id,w_4}(q)=q^5+q^4+3q^3+2q^2+2q+1,
\\&P_{id,w_5}(q)=q^4+q^3+2q^2+q+1, &&P_{id,w_6}(q)=q^3+q+1,
\\&P_{id,w_7}(q)=1
\end{align*}
Thus by Lemma \ref{lem:smooth} only $\scvar{w_7}$ is smooth.

\subsection{Type $E_8$} We do the same procedure as in type $E_6, E_7$ and obtain the following.

\begin{center}
\begin{tikzpicture}[scale=1]
	\node at (0 cm,0) {$\scvar{w_1}$};
	\node at (2 cm,0) {$\scvar{w_3}$};
	\node at (4 cm,0) {$\scvar{w_4}$};
	\node at (6 cm,0) {$\scvar{w_5}$};
	\node at (8 cm,0) {$\scvar{w_6}$};
	\node at (10 cm,0) {$\scvar{w_7}$};
	\node at (12 cm,0) {$\scvar{w_8}$};
	\node at (4 cm,-2cm) {$\scvar{w_2}$};

    \draw[thick, rounded corners=8] (-0.6, -0.3) rectangle (0.6, 0.3) {};
    \draw[thick, rounded corners=8] (1.4, -0.3) rectangle (2.6, 0.3) {};
    \draw[thick, rounded corners=8] (3.4, -0.3) rectangle (4.6, 0.3) {};
    \draw[thick, rounded corners=8] (5.4, -0.3) rectangle (6.6, 0.3) {};
    \draw[thick, rounded corners=8] (7.4, -0.3) rectangle (8.6, 0.3) {};
    \draw[thick, rounded corners=8] (9.4, -0.3) rectangle (10.6, 0.3) {};
    \draw[thick, rounded corners=8] (11.4, -0.3) rectangle (12.6, 0.3) {};
    \draw[thick, rounded corners=8] (3.4, -2.3) rectangle (4.6, -1.7) {};
    
    \draw[thick] (0.6, 0) -- (1.4,0);
    \draw[thick] (2.6, 0) -- (3.4,0);
    \draw[thick] (4.6, 0) -- (5.4,0);
    \draw[thick] (6.6, 0) -- (7.4,0);
    \draw[thick] (8.6, 0) -- (9.4,0);
    \draw[thick] (10.6, 0) -- (11.4,0);
    \draw[thick] (4, -0.3) -- (4,-1.7);
\end{tikzpicture}
\end{center}
Here each $w_i$ is as follows.
\begin{align*}
w_1 = s(&134562453413245678654324567134562453413245678
\\&2456734562453413245675645341324565413245432432)
\\w_2=s(&134562453413245678654324567134562453413245678
\\&2456734562453413245675645341324565413245134131)
\\w_3=s(&134562453413245678654324567134562453413245678
\\&2456734562453413245675645341324565413245432421)
\\w_4=s(&134562453413245678654324567134562453413245678
\\&2456734562453413245675645341324565413245341321)
\\w_5=s(&134562453413245678654324567134562453413245678
\\&2456734562453413245675645341324562453413241321)
\\w_6=s(&134562453413245678654324567134562453413245678
\\&2456734562453413245671345624534132453413241321)
\\w_7=s(&134562453413245678654324567134562453413245678
\\&7654324567134562453413245624534132453413241321)
\\w_8=s(&765432456713456245341324567876543245671345624
\\&5341324567134562453413245624534132453413241321)
\end{align*}
Also $I_{w_i} = S-\{\alpha_i\}$.  Instead of Kazhdan-Lusztig polynomials $P_{id, w_i}$, we calculate the Poincar{\'e} polynomial $\mathbf{P}_{(id, w_i)}$ for each Bruhat inverval $(id, w_i)$ as follows.\footnote{The author thanks David Vogan for assistance to this calculation.}
\begin{alignat*}{6}
\mathbf{P}_{(id,w_1)}(q)=&q^{91}+8q^{90}+35q^{89}+113q^{88}+ &\cdots&& + 112q^3+35q^2+8q+1,
\\\mathbf{P}_{(id,w_2)}(q)=&q^{91}+8q^{90}+36q^{89}+ &\cdots&& +35q^2+8q+1,
\\\mathbf{P}_{(id,w_3)}(q)=&q^{91}+9q^{90}+ &\cdots&& +8q+1,
\\\mathbf{P}_{(id,w_4)}(q)=&q^{91}+10q^{90}+ &\cdots&&+8q+1,
\\\mathbf{P}_{(id,w_5)}(q)=&q^{91}+9q^{90}+ &\cdots&& +8q+1,
\\\mathbf{P}_{(id,w_6)}(q)=&q^{91}+9q^{90}+ &\cdots&& +8q+1,
\\\mathbf{P}_{(id,w_7)}(q)=&q^{91}+9q^{90}+ &\cdots&& +8q+1,
\end{alignat*}
\begin{align*}
\mathbf{P}_{(id,w_8)}(q)=&q^{91}+8q^{90}+35q^{89}+112q^{88}+294q^{87} +673 q^{86}+
\\&\qquad \qquad \cdots+672q^5+294q^4+ 112q^3+35q^2+8q+1.
\end{align*}
Thus, $\mathbf{P}_{(id,w_i)}$ are not palindromic for all $1\leq i \leq 8$, which implies that no $\scvar{w_i}$ is smooth by Lemma \ref{lem:smooth}.\footnote{It is indeed extremely time-consuming to calculate the Kazhdan-Lusztig polynomials $P_{id, w_i}$.}

\subsection{Type $F_4$}
Let $G$ be a simple group of type $F_4$ and assume $\nn \in \g$ is a non-trivial element in $X_\lambda \subset \n$. Then $\nn$ is contained in the minimal nilpotent orbit. As in type $E$, $\Gamma_\nn$ is described as
\begin{center}
\begin{tikzpicture}[scale=1]
	\node at (0 cm,0) {$\scvar{w_1}$};
	\node at (2 cm,0) {$\scvar{w_2}$};
    \draw[thick, rounded corners=8] (-0.6, -0.3) rectangle (0.6, 0.3) {};
    \draw[thick, rounded corners=8] (1.4, -0.3) rectangle (2.6, 0.3) {};
    \draw[thick] (0.6, 0) -- (1.4,0);
\end{tikzpicture}
\end{center}
Here each $w_i$ is as follows.
\begin{gather*}
w_1=s(3234323123431232)
\\w_2=s(3234323123431231)
\end{gather*}
Also $I_{w_i} = S-\{\alpha_i\}$. The Kazhdan-Lusztig polynomial $P_{id, w_i}$ for each $w_i$ is as follows.
\begin{align*}
&P_{id,w_1}(q)=q^3+1, &&P_{id,w_2}(q)=q^2+q+1 
\end{align*}
Thus both components are not even rationally smooth.

Now we assume $\nn \in \g$ is a nontrivial element in $X_\mu \subset \n$. Then $\nn$ is contained in the minimal special nilpotent orbit. As the justification for this case is lengthy and complicated, we state the result here and give its proof in the appendix. $\Gamma_\nn$ is described as follows. 

\begin{center}
\begin{tikzpicture}[scale=1]
	\node at (0 cm,0) {$X_4$};
	\node at (2 cm,0) {$X_5$};
	\node at (4 cm,0) {$X_6$};
	\node at (-4 cm,0) {$X_1$};
	\node at (-2 cm,0) {$X_3$};
	\node at (0 ,-2cm) {$X_2$};

    \draw[thick] (0 cm,0) circle (3 mm);
    \draw[thick] (2 cm,0) circle (3 mm);
    \draw[thick] (4 cm,0) circle (3 mm);
    \draw[thick] (-4 cm,0) circle (3 mm);
    \draw[thick] (-2 cm,0) circle (3 mm);
    \draw[thick] (0,-2cm) circle (3 mm);    
    
    \draw[thick] (0.3, 0) -- (1.7,0);
    \draw[thick] (2.3, 0) -- (3.7,0);
    \draw[thick] (-0.3, 0) -- (-1.7,0);
    \draw[thick] (-2.3, 0) -- (-3.7,0);
    \draw[thick] (0, -0.3) -- (0,-1.7);
\end{tikzpicture}
\end{center}
Also $I_{X_1} =I_{X_6}=\{s_1, s_2, s_3\}, I_{X_2} = \{s_2, s_3, s_4\}, I_{X_3}=I_{X_5} = \{s_1, s_2, s_4\}, I_{X_4} = \{s_1, s_3, s_4\}$. The action of the nontrivial element in $C(\nn) \simeq \Z/2$ is given by the nontrivial automorphism on $\Gamma_\nn$ which permutes $X_1$ and $X_6$, and $X_3$ and $X_5$.
\begin{rmk} It is likely that not every irreducible component is smooth in this case.
\end{rmk}

\subsection{Type $G_2$} If $\nn \in \g$ is a nontrivial element in $X_\lambda \subset \n$, then $\nn$ is contained in the minimal nilpotent orbit. Then $\B_\nn = \scvar{s(121)}$ using the method of \cite{dg:minimal}, thus $\Gamma_\nn$ is
\begin{center}
\begin{adjustbox}{valign=c}
\begin{tikzpicture}[scale=1]
	\node at (0 cm,0) {$\B_\nn$};
	\draw[thick] (0 cm,0) circle (3 mm);
\end{tikzpicture}
\end{adjustbox}
\end{center}
Also, $I_{\B_\nn}= \{s_1\}$. Note that $\scvar{s(121)}$ is not smooth by \cite[Theorem 2.4]{bp:schubert}, even though it is rationally smooth because the Kazhdan-Lusztig polynomial $P_{id, s(121)}(q)$ is 1.

For type $G_2$, the minimal special nilpotent orbit is the subregular one. Thus by result of \cite[3.10]{steinberg:book}, $\B_\nn$ is a Dynkin curve and $\Gamma_\nn$ is as follows.
\begin{center}
\begin{tikzpicture}[scale=1]
	\node at (0 cm,0) {$X_2$};
	\node at (2 cm,0) {$X_1$};
	\node at (4 cm,0) {$X_4$};
	\node at (2 cm,-2cm) {$X_3$};
	\draw[thick] (0 cm,0) circle (3mm);
	\draw[thick] (2 cm,0) circle (3mm);
	\draw[thick] (4 cm,0) circle (3mm);
	\draw[thick] (2 cm,-2cm) circle (3mm);
    \draw[thick] (0.3, 0) -- (1.7,0);
    \draw[thick] (2.3, 0) -- (3.7,0);
    \draw[thick] (2, -0.3) -- (2,-1.7);
\end{tikzpicture}
\end{center}
Also $I_{X_1} = \{\alpha_2\}, I_{X_2}=I_{X_3}=I_{X_4}=\{\alpha_1\}$. Every irreducible component is $\P^1$, hence smooth. The action of $C(\nn) \simeq S_3$ stablizes $X_1$ but permutes $X_2, X_3,$ and $X_4$ faithfully.

\section{Relations with folding of Lie algebras}
This section is motivated by the question of Humphreys in \cite{humphreys:mof} and the comment by Paul Levy therein. Assume $G$ is simple of simply-laced type, and $\sigma$ is an automorphism on $G$ which gives a nontrivial action on the Dynkin diagram of $G$. We list some of possible choices of $(G, \sigma)$. (Here $|\sigma|$ is the order of $\sigma$.)
\begin{enumerate}[(1)]
\item $G$ is of type $A_{2n-1}, |\sigma|=2$: $G^\sigma$ is of type $C_n$
\item $G$ is of type $D_{n+1}, |\sigma|=2$: $G^\sigma$ is of type $B_n$
\item $G$ is of type $D_{4}, |\sigma|=3$: $G^\sigma$ is of type $G_2$
\item $G$ is of type $E_6, |\sigma|=2$: $G^\sigma$ is of type $F_4$
\end{enumerate}

We describe how these automorphisms are related to the results we obtained so far. Assume that $\ch \k$ is good for $G$ and $G^\sigma$. Thus in particular $|\sigma| \in \k^*$. We may choose $T, B\subset G$ such that they are stable under $\sigma$, and each $T^\sigma, B^\sigma \subset G^\sigma$ is a maximal torus and a Borel subgroup of $G^\sigma$, respectively. (cf. \cite[Chapter 8]{steinberg:endomorphism}) Then there exists a natural embedding $G^\sigma/B^\sigma \hookrightarrow G/B,$ i.e. $\B(G^\sigma) \hookrightarrow \B(G)=\B$. Let $\nn \in \g$ be contained in the minimal nilpotent orbit of $\g$ and denote also by $\sigma$ its induced action on $\g$. Then $\nn'\colonequals \sum_{i=0}^{|\sigma|-1 }\sigma^i(\nn) \in \g^\sigma$. We assume that $\nn$ is chosen generically, then by \cite{bk:folding} $\nn'$ is contained in the minimal special nilpotent orbit of $\g^\sigma$. 

We claim that $\dim \B-\dim \B_\nn=  \dim \B(G^\sigma)- \dim \B(G^\sigma)_{\nn'}.$ (This can also be checked case-by-case.) We denote by $\O(\nn), \O(\nn')$ the orbit of $\nn$ in $\g$ and that of $\nn'$ in $\g^\sigma$, respectively. By \cite{bk:folding}, there is a $\g^\sigma$-equivariant projection $\pi: \g \rightarrow \g^\sigma$ which induces a finite morphism $\overline{\O(\nn)} \rightarrow \overline{\O(\nn')}$. In particular, $\dim \O(\nn) = \dim \O(\nn')$. As for any nilpotent $\tilde{\nn} \in \g$
$$\dim \B_{\tilde{\nn}} = \dim U - \frac{1}{2} \dim \O(\tilde{\nn}), \textup{ i.e. } \dim \B - \dim \B_{\tilde{\nn}} = \frac{1}{2} \dim \O(\tilde{\nn}),$$
the result follows.

Note that $\B_\nn \cap \B(G^\sigma) \subset \B(G^\sigma)_{\nn'}.$ Thus we have
$$\dim \B_\nn \cap \B(G^\sigma) \leq \dim \B(G^\sigma)_{\nn'}= \dim \B_\nn+ \dim \B(G^\sigma)-\dim \B \leq \dim \B_\nn \cap \B(G^\sigma),$$
where the equality in the middle is obtained from above and the latter inequality follows from the relation of codimensions of varieties and their intersection. Thus $\dim \B_\nn \cap \B(G^\sigma)=\dim  \B(G^\sigma)_{\nn'}$
and $\B_\nn \cap \B(G^\sigma)$ contains some irreducible components of $ \B(G^\sigma)_{\nn'}$.

Note that $\Gamma_{\nn'}$ and $\Gamma_\nn$ have the same shape by case-by-case observation and in particular $|\Irr \B_\nn| = |\Irr  \B(G^\sigma)_{\nn'}|$. Thus if we can show that
\begin{equation}\tag{$*$}\label{eq:int}
\textup{for any } X \in \Irr(\B_\nn), \quad X \cap \B(G^\sigma) \neq \emptyset,
\end{equation}
it follows that $\B_\nn \cap \B(G^\sigma) = \B(G^\sigma)_{\nn'}$ and each irreducible component of $\B(G^\sigma)_{\nn'}$ is the intersection of each irreducible component of $\B_\nn$ and $\B(G^\sigma)$, i.e.
$$\Irr(\B(G^\sigma)_{\nn'}) = \{ X \cap \B(G^\sigma) \mid X \in \Irr (\B_\nn)\}.$$

We show that indeed (\ref{eq:int}) is satisfied for any $G$ on the list above. First we assume $G=SL_{2n}(\k)$ and $\k^{2n}$ are equipped with a symplectic bilinear form.
By Section \ref{sec:typeA} each $X_i \in \Irr(\B_\nn)$ is given by
$$X_i \colonequals \{[F_j] \in \B \mid \im \nn \subset F_{i} \subset \ker \nn\}.$$
Then $X_i\cap \Irr(\B_\nn) \neq \emptyset$ is equivalent to that there exists $F_i \subset \k^{2n}$ such that $\im \nn \subset F_{i} \subset \ker \nn$, and isotropic if $1\leq i\leq n$ and coisotropic if $n \leq i \leq 2n-1$. But it is true for a generic choice of $\nn$.

If $G=SO_{2n+2}(\k)$, then first suppose that we are given each $\k^{2n+1}$, $\k^{2n+2}$ with a symmetric bilinear form, respectively, 
and an isometry $\k^{2n+1}\hookrightarrow \k^{2n+2}$. Then $G^\sigma \hookrightarrow G$ can be regarded as the monomorphism $SO_{2n+1}(\k) \hookrightarrow SO_{2n+2}(\k)$ induced by the embedding above.
By Section \ref{sec:typeD} each $X_i \subset \Irr \B_\nn$ contains an open dense subset $Y_i$ which is defined by
\begin{align*}Y_i \colonequals \{[F_j] \in \B \mid& \im \nn \not\subset F_i, \textup{there exists a line } \vec{l} \subset \im \nn \textup{ such that }\vec{l}  \subset F_i \subset \ker \nn\}.
\end{align*}
Then $Y_i\cap \Irr(\B_\nn) \neq \emptyset$ is equivalent to that there exists $F_i \subset \k^{2n}$ which satisfies the property above and $F_i \subset \br{e_1, \cdots, e_{n}, f_1, \cdots, f_n, e_{n+1}+f_{n+1}}$. But again it is true for generic choice of $\nn$.

If $G$ is of type $D_4$ or $E_6$, then we argue as follows. Since we chose $\nn$ generically, the intersection of $\B(G^\sigma)$ and each $X \in \Irr(\B_\nn)$ is nonempty if the intersection product $[X]\cdot[\B(G^\sigma)]$ is nonzero in the Chow ring of $\B$. As a Chow ring of a flag variety is canonically isomorphic to its cohomology, it suffices to check if $\iota^*([X])\neq 0$ for $[X] \in H^*(\B)$ where $\iota^*: H^*(\B) \rightarrow H^*(\B(G^\sigma))$ is the pull-back under $\iota: \B(G^\sigma) \hookrightarrow \B$. This can be done by explicit calculation.\footnote{Here each $X \in \Irr(\B_\nn)$ is a Schubert variety, thus its class in $H^*(\B)$ is known. Also one can check that $\iota^*: H^*(\B) \rightarrow H^*(\B(G^\sigma))$ is (at least in these cases) isomorphic to 
$$H^*(\B) \twoheadrightarrow H^*(\B)/\br{\L - \sigma(\L), \L \in \Pic(\B) }.$$}

In sum, we have $\Irr(\B(G^\sigma)_{\nn'}) = \{ X \cap \B(G^\sigma) \mid X \in \Irr (\B_\nn)\}.$ Now we claim that for $X, Y \in \Irr(\B_\nn)$,
\begin{equation}\label{edgeeq}
(X ,Y) \in E(\Gamma_{\nn}) \Leftrightarrow (X \cap \B(G^\sigma) ,Y \cap \B(G^\sigma)) \in E(\Gamma_{\nn'}).
\end{equation}
Indeed, suppose $\dim (X \cap \B(G^\sigma)) \cap (Y \cap \B(G^\sigma)) = \dim \B(G^\sigma)_{\nn'} -1$. Then in particular $X\cap Y$ is nonempty. As we chose $\nn$ generically, $\dim X\cap Y\cap \B(G^\sigma) = \dim X\cap Y -\codim_\B \B(G^\sigma). $ Now if $\dim X\cap Y < \dim \B_\nn -1$, then 
\begin{align*}
\dim X\cap Y\cap \B(G^\sigma) &= \dim X\cap Y -\codim_\B \B(G^\sigma)
\\&< \dim \B_\nn -1-\dim \B + \dim \B(G^\sigma) = \dim \B(G^\sigma)_{\nn'}-1,
\end{align*}
which is absurd. Thus $\dim X\cap Y = \dim \B_\nn-1$ and $(X,Y) \in E(\Gamma_\nn)$. Now the other direction follows from the fact that $|E(\Gamma_\nn)|=|E(\Gamma_{\nn'})|$.

\begin{rmk} It is still not sufficient to give a uniform proof (or verification) which describes the structure of $\Gamma_\nn$ without using the classification of Lie algebras or case-by-case argument, as suggested in \cite{humphreys:mof}. As for the argument in this section, one of the difficulties which hinders avoiding case-by-case argument is to check that the intersection of each irreducible component of Springer fibers and the flag variety of $G^\sigma$ is nonempty.
\end{rmk}

\appendix
\section{The minimal special nilpotent orbit of type $F_4$}

In this section we let $G$ be a simple algebraic group of type $F_4$. We assume that $\nn \in \g$ is a nontrivial element in $X_\mu \subset \n$, where $\mu$ is the highest root among short roots of $G$. We consider $\B_\nn \cap \S(w)$ for each $w\in W$ when it is nonempty, i.e. when $w^{-1}(\mu)>0$ by Lemma \ref{lem:explicit}. Since we only need information of irreducible components and their codimension 1 intersections, we only list $w\in W$ where $\codim_{\B_\nn}\dim (\S(w) \cap \B_\nn)=$ 0 or 1, i.e. $\dim (\S(w) \cap \B_\nn)=12$ or 13 in the following tables. 

Here the first column represents the label of each element $w \in W$ which satisfies the condition (we keep these labels from now on,) the second column gives a reduced expression for each $w\in W$ and the third column is the subset $A_w \subset \Phi^+$ of positive roots which satisfy the condition in Lemma \ref{lem:explicit}(2). Here $(a,b,c,d)$ means $a\alpha_1+b\alpha_2+c\alpha_3+d\alpha_4 \in \Phi$.

\begin{center}
\begin{tabular}{|c|c|c|}
\hline
\multicolumn{3}{|c|}{$\dim (X_w \cap \B_\nn)=\dim \B_\nn=13$}
\\\hline
$w \in W$ & red. exp.& $A_w$
\\\hline
\multirow{3}{*}{$y_\iB$}&\multirow{3}{*}{$s(3231234323123121)$}&
(0, 1, 0, 0), (0, 1, 2, 0), (0, 1, 2, 1), (1, 0, 0, 0), (1, 1, 0, 0),
\\&& (1, 1, 2, 0), (1, 1, 2, 1), (1, 2, 2, 0), (1, 2, 2, 1), (1, 2, 3, 1),
\\&&(1, 2, 4, 2), (1, 3, 4, 2), (2, 3, 4, 2)
\\\hline
\multirow{3}{*}{$y_\iD$}&\multirow{3}{*}{$s(12342312343232)$}&
(0, 0, 1, 0), (0, 1, 0, 0), (0, 1, 1, 0), (0, 1, 2, 0), (1, 0, 0, 0),
\\&&(1, 1, 0, 0), (1, 1, 1, 1), (1, 1, 2, 0), (1, 1, 2, 1), (1, 2, 2, 0),
\\&&(1, 2, 2, 1), (1, 2, 3, 1), (2, 3, 4, 2)
\\\hline
\multirow{3}{*}{$y_\iE$}&\multirow{3}{*}{$s(1234231234323121)$}&
(0, 1, 0, 0), (0, 1, 2, 0), (1, 0, 0, 0), (1, 1, 0, 0), (1, 1, 1, 1),
\\&&(1, 1, 2, 0), (1, 1, 2, 1), (1, 2, 2, 0), (1, 2, 2, 1), (1, 2, 3, 1),
\\&&(1, 2, 4, 2), (1, 3, 4, 2), (2, 3, 4, 2)
\\\hline
\multirow{3}{*}{$y_\iC$}&\multirow{3}{*}{$s(123423123432321)$}&
(0, 0, 1, 0), (0, 1, 0, 0), (0, 1, 2, 0), (1, 0, 0, 0), (1, 1, 0, 0),
\\&&(1, 1, 1, 1), (1, 1, 2, 0), (1, 1, 2, 1), (1, 2, 2, 0), (1, 2, 2, 1),
\\&&(1, 2, 3, 1), (1, 3, 4, 2), (2, 3, 4, 2)
\\\hline
\multirow{3}{*}{$y_\iA$}&\multirow{3}{*}{$s(123423123423121)$}&
(0, 1, 0, 0), (0, 1, 2, 0), (1, 0, 0, 0), (1, 1, 0, 0), (1, 1, 1, 1),
\\&& (1, 1, 2, 0), (1, 1, 2, 1), (1, 2, 2, 0), (1, 2, 2, 1), (1, 2, 2, 2),
\\&& (1, 2, 3, 1), (1, 3, 4, 2), (2, 3, 4, 2)
\\\hline
\multirow{3}{*}{$y_\iF$}&\multirow{3}{*}{$s(231234323123121)$}&
(0, 1, 0, 0), (0, 1, 1, 1), (0, 1, 2, 0), (1, 0, 0, 0), (1, 1, 0, 0),
\\&&(1, 1, 1, 1), (1, 1, 2, 0), (1, 2, 2, 0), (1, 2, 2, 1), (1, 2, 2, 2),
\\&&(1, 2, 3, 1), (1, 3, 4, 2), (2, 3, 4, 2)
\\\hline
\end{tabular}
\end{center}

\begin{center}
\begin{tabular}{|c|c|c|}
\hline
\multicolumn{3}{|c|}{$\dim (X_w \cap \B_\nn)=\dim \B_\nn-1=12$}
\\\hline
$w \in W$ & red. exp.& $A_w$
\\\hline
\multirow{2}{*}{$z_1$}&\multirow{2}{*}{$s(1234231234232)$}&
(0, 1, 0, 0), (0, 1, 1, 0), (0, 1, 2, 0), (1, 0, 0, 0), (1, 1, 0, 0), (1, 1, 1, 1),
\\&& (1, 1, 2, 0), (1, 1, 2, 1), (1, 2, 2, 0), (1, 2, 2, 1), (1, 2, 3, 1), (2, 3, 4, 2)
\\\hline
\multirow{2}{*}{$z_2$}&\multirow{2}{*}{$s(12342312342321)$}&
(0, 1, 0, 0), (0, 1, 2, 0), (1, 0, 0, 0), (1, 1, 0, 0), (1, 1, 1, 1), (1, 1, 2, 0),
\\&&(1, 1, 2, 1), (1, 2, 2, 0), (1, 2, 2, 1), (1, 2, 3, 1), (1, 3, 4, 2), (2, 3, 4, 2)
\\\hline
\multirow{2}{*}{$z_3$}&\multirow{2}{*}{$s(23123432312312)$}&
(0, 1, 0, 0), (0, 1, 1, 1), (1, 0, 0, 0), (1, 1, 0, 0), (1, 1, 1, 1), (1, 1, 2, 0),
\\&&(1, 2, 2, 0), (1, 2, 2, 1), (1, 2, 2, 2), (1, 2, 3, 1), (1, 3, 4, 2), (2, 3, 4, 2)
\\\hline
\multirow{2}{*}{$z_4$}&\multirow{2}{*}{$s(12342312342312)$}&
(0, 1, 0, 0), (1, 0, 0, 0), (1, 1, 0, 0), (1, 1, 1, 1), (1, 1, 2, 0), (1, 1, 2, 1),
\\&&(1, 2, 2, 0), (1, 2, 2, 1), (1, 2, 2, 2), (1, 2, 3, 1), (1, 3, 4, 2), (2, 3, 4, 2)
\\\hline
\multirow{2}{*}{$z_5$}&\multirow{2}{*}{$s(23123423123121)$}&
(0, 1, 0, 0), (0, 1, 2, 0), (1, 0, 0, 0), (1, 1, 0, 0), (1, 1, 1, 1), (1, 1, 2, 0),
\\&&(1, 2, 2, 0), (1, 2, 2, 1), (1, 2, 2, 2), (1, 2, 3, 1), (1, 3, 4, 2), (2, 3, 4, 2)
\\\hline
\multirow{2}{*}{$z_6$}&\multirow{2}{*}{$s(323123423123121)$}&
(0, 1, 0, 0), (0, 1, 2, 0), (1, 0, 0, 0), (1, 1, 0, 0), (1, 1, 2, 0), (1, 1, 2, 1),
\\&&(1, 2, 2, 0), (1, 2, 2, 1), (1, 2, 3, 1), (1, 2, 4, 2), (1, 3, 4, 2), (2, 3, 4, 2)
\\\hline
\multirow{2}{*}{$z_7$}&\multirow{2}{*}{$s(1234323123121)$}&
(0, 0, 0, 1), (0, 1, 0, 0), (1, 0, 0, 0), (1, 1, 0, 0), (1, 1, 1, 1), (1, 1, 2, 0),
\\&&(1, 1, 2, 1), (1, 1, 2, 2), (1, 2, 2, 0), (1, 2, 2, 1), (1, 2, 2, 2), (2, 3, 4, 2)
\\\hline
\multirow{2}{*}{$z_8$}&\multirow{2}{*}{$s(23423123432321)$}&
(0, 0, 1, 0), (0, 1, 0, 0), (0, 1, 1, 1), (0, 1, 2, 0), (0, 1, 2, 1), (1, 1, 0, 0), 
\\&&(1, 1, 2, 0), (1, 2, 2, 0), (1, 2, 2, 1), (1, 2, 3, 1), (1, 3, 4, 2), (2, 3, 4, 2)
\\\hline
\multirow{2}{*}{$z_9$}&\multirow{2}{*}{$s(323123432312321)$}&
(0, 1, 0, 0), (0, 1, 2, 0), (0, 1, 2, 1), (1, 1, 0, 0), (1, 1, 2, 0), (1, 1, 2, 1),
\\&&(1, 2, 2, 0), (1, 2, 2, 1), (1, 2, 3, 1), (1, 2, 4, 2), (1, 3, 4, 2), (2, 3, 4, 2)
\\\hline
\multirow{2}{*}{$z_{10}$}&\multirow{2}{*}{$s(1234231234323)$}&
(0, 0, 1, 0), (0, 1, 1, 0), (0, 1, 2, 0), (1, 0, 0, 0), (1, 1, 0, 0), (1, 1, 1, 1),
\\&&(1, 1, 2, 0), (1, 1, 2, 1), (1, 2, 2, 0), (1, 2, 2, 1), (1, 2, 3, 1), (2, 3, 4, 2)
\\\hline
\multirow{2}{*}{$z_{11}$}&\multirow{2}{*}{$s(1234231234231)$}&
(0, 1, 0, 0), (0, 1, 1, 0), (1, 0, 0, 0), (1, 1, 0, 0), (1, 1, 1, 1), (1, 1, 2, 0),
\\&&(1, 1, 2, 1), (1, 2, 2, 0), (1, 2, 2, 1), (1, 2, 2, 2), (1, 2, 3, 1), (2, 3, 4, 2)
\\\hline
\multirow{2}{*}{$z_{12}$}&\multirow{2}{*}{$s(2342312343232)$}&
(0, 0, 1, 0), (0, 1, 0, 0), (0, 1, 1, 1), (0, 1, 2, 0), (0, 1, 2, 1), (1, 1, 0, 0),
\\&&(1, 1, 1, 0), (1, 1, 2, 0), (1, 2, 2, 0), (1, 2, 2, 1), (1, 2, 3, 1), (1, 3, 4, 2)
\\\hline
\multirow{2}{*}{$z_{13}$}&\multirow{2}{*}{$s(23123432312321)$}&
(0, 1, 0, 0), (0, 1, 1, 1), (0, 1, 2, 0), (1, 1, 0, 0), (1, 1, 1, 1), (1, 1, 2, 0),
\\&&(1, 2, 2, 0), (1, 2, 2, 1), (1, 2, 2, 2), (1, 2, 3, 1), (1, 3, 4, 2), (2, 3, 4, 2)
\\\hline
\multirow{2}{*}{$z_{14}$}&\multirow{2}{*}{$s(23423123423121)$}&
(0, 1, 0, 0), (0, 1, 1, 1), (0, 1, 2, 0), (0, 1, 2, 1), (1, 1, 0, 0), (1, 1, 2, 0),
\\&&(1, 2, 2, 0), (1, 2, 2, 1), (1, 2, 2, 2), (1, 2, 3, 1), (1, 3, 4, 2), (2, 3, 4, 2)
\\\hline
\multirow{2}{*}{$z_{15}$}&\multirow{2}{*}{$s(234231234323121)$}&
(0, 1, 0, 0), (0, 1, 1, 1), (0, 1, 2, 0), (0, 1, 2, 1), (1, 1, 0, 0), (1, 1, 2, 0),
\\&&(1, 2, 2, 0), (1, 2, 2, 1), (1, 2, 3, 1), (1, 2, 4, 2), (1, 3, 4, 2), (2, 3, 4, 2)
\\\hline
\multirow{2}{*}{$z_{16}$}&\multirow{2}{*}{$s(31234323123121)$}&
(0, 0, 1, 1), (0, 1, 2, 0), (1, 0, 0, 0), (1, 1, 0, 0), (1, 1, 1, 1), (1, 1, 2, 0),
\\&&(1, 1, 2, 1), (1, 1, 2, 2), (1, 2, 2, 0), (1, 2, 3, 1), (1, 2, 4, 2), (2, 3, 4, 2)
\\\hline
\multirow{2}{*}{$z_{17}$}&\multirow{2}{*}{$s(323123432312312)$}&
(0, 1, 2, 0), (0, 1, 2, 1), (1, 0, 0, 0), (1, 1, 0, 0), (1, 1, 2, 0), (1, 1, 2, 1),
\\&&(1, 2, 2, 0), (1, 2, 2, 1), (1, 2, 3, 1), (1, 2, 4, 2), (1, 3, 4, 2), (2, 3, 4, 2)
\\\hline
\multirow{2}{*}{$z_{18}$}&\multirow{2}{*}{$s(32312342312321)$}&
(0, 0, 1, 0), (0, 1, 0, 0), (0, 1, 2, 0), (1, 0, 0, 0), (1, 1, 0, 0), (1, 1, 2, 0),
\\&&(1, 1, 2, 1), (1, 2, 2, 0), (1, 2, 2, 1), (1, 2, 3, 1), (1, 3, 4, 2), (2, 3, 4, 2)
\\\hline
\multirow{2}{*}{$z_{19}$}&\multirow{2}{*}{$s(12342312343121)$}&
(0, 1, 2, 0), (1, 0, 0, 0), (1, 1, 0, 0), (1, 1, 1, 1), (1, 1, 2, 0), (1, 1, 2, 1),
\\&&(1, 1, 2, 2), (1, 2, 2, 0), (1, 2, 2, 1), (1, 2, 3, 1), (1, 2, 4, 2), (2, 3, 4, 2)
\\\hline
\multirow{2}{*}{$z_{20}$}&\multirow{2}{*}{$s(12342312343231)$}&
(0, 1, 1, 0), (0, 1, 2, 0), (1, 0, 0, 0), (1, 1, 0, 0), (1, 1, 1, 1), (1, 1, 2, 0),
\\&&(1, 1, 2, 1), (1, 2, 2, 0), (1, 2, 2, 1), (1, 2, 3, 1), (1, 2, 4, 2), (2, 3, 4, 2)
\\\hline
\multirow{2}{*}{$z_{21}$}&\multirow{2}{*}{$s(123423123432312)$}&
(0, 1, 2, 0), (1, 0, 0, 0), (1, 1, 0, 0), (1, 1, 1, 1), (1, 1, 2, 0), (1, 1, 2, 1),
\\&&(1, 2, 2, 0), (1, 2, 2, 1), (1, 2, 3, 1), (1, 2, 4, 2), (1, 3, 4, 2), (2, 3, 4, 2)
\\\hline
\multirow{2}{*}{$z_{22}$}&\multirow{2}{*}{$s(3231234231232)$}&
(0, 0, 1, 0), (0, 1, 0, 0), (0, 1, 1, 0), (0, 1, 2, 0), (1, 0, 0, 0), (1, 1, 0, 0),
\\&&(1, 1, 2, 0), (1, 1, 2, 1), (1, 2, 2, 0), (1, 2, 2, 1), (1, 2, 3, 1), (2, 3, 4, 2)
\\\hline
\multirow{2}{*}{$z_{23}$}&\multirow{2}{*}{$s(1234231234121)$}&
(0, 1, 0, 0), (1, 0, 0, 0), (1, 1, 0, 0), (1, 1, 1, 1), (1, 1, 2, 0), (1, 1, 2, 1),
\\&&(1, 1, 2, 2), (1, 2, 2, 0), (1, 2, 2, 1), (1, 2, 2, 2), (1, 2, 3, 1), (2, 3, 4, 2)
\\\hline
\end{tabular}
\end{center}
We let $Y_i \colonequals \S(y_i) \cap \B_\nn$ and $Z_i \colonequals \S(z_i) \cap \B_\nn$. Then $\Irr\B_\nn=\{\overline{Y_1}, \overline{Y_2}, \cdots, \overline{Y_6}\}$. Now we use the following lemma repeatedly.
\begin{lem}  \label{bruhat_closure}Suppose $s_i \in S$ and  $ w\in W$ satisfy $l(ws_i) =l(w)-1$. Then for any $A \subset \Phi$ which contains $ws_i(\alpha_i)$, we have 
$$\overline{\prod_{\alpha \in A} U_\alpha wB/B} \supset \prod_{\alpha \in A-\{ws_i(\alpha_i)\}} U_\alpha ws_iB/B.$$
Also, $\overline{\prod_{\alpha \in A} U_\alpha wB/B}$ is a union of lines of type $s_i$.
\end{lem}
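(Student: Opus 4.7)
My plan is to reduce everything to an SL$_2$-computation on the line $L := wP_{s_i}/B \cong \P^1$, which contains both $wB/B$ and $ws_iB/B$. The first step is to identify which affine chart of $L$ is relevant. Since $l(ws_i) = l(w)-1$ forces $w(\alpha_i) < 0$, the root $\beta := ws_i(\alpha_i) = -w(\alpha_i)$ is positive, and translating the standard decomposition $P_{s_i}/B = \{B/B\} \sqcup U_{\alpha_i}\cdot s_iB/B$ by $ws_i$ gives
\[
L = \{ws_iB/B\} \sqcup U_\beta \cdot wB/B,
\]
so $\overline{U_\beta \cdot wB/B} = L$, and this closure picks up $ws_iB/B$ as the limit point at infinity.

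For the first assertion, I would fix a reduced expression for $w$ ending in $s_i$ (so that $\beta$ is the root attached to the last factor), order the product so that $U_\beta$ comes last, and set $V := \prod_{\alpha \in A - \{\beta\}} U_\alpha$. Then $\prod_{\alpha \in A} U_\alpha \cdot wB/B = V \cdot (L - \{ws_iB/B\})$, and continuity of the $V$-action yields
\[
\overline{\prod_{\alpha \in A} U_\alpha \cdot wB/B} \supset V \cdot \overline{L - \{ws_iB/B\}} = V \cdot L \supset V \cdot ws_iB/B,
\]
which is exactly the lower-dimensional cell named in the conclusion.

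For the second assertion, I would pass to the $\P^1$-bundle $\pi_{s_i} \colon \B \to G/P_{s_i}$ whose fibers are precisely the lines of type $s_i$. Since $L$ is such a fiber, $V \cdot L$ equals $\pi_{s_i}^{-1}(V \cdot \pi_{s_i}(wB/B))$ and is therefore $\pi_{s_i}$-saturated. Combining with the first assertion gives $\overline{\prod_{\alpha \in A} U_\alpha \cdot wB/B} = \overline{V \cdot L}$, and
\[
\overline{V\cdot L} \subset \pi_{s_i}^{-1}\bigl(\overline{V \cdot \pi_{s_i}(wB/B)}\bigr),
\]
where the right-hand side is closed, irreducible, and manifestly a union of $\pi_{s_i}$-fibers. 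Matching dimensions — both sides being irreducible of dimension $\dim V \cdot \pi_{s_i}(wB/B) + 1$ — forces equality, and $\pi_{s_i}$-saturation is inherited by the left-hand side, which is what it means to be a union of lines of type $s_i$.

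The main obstacle is this final dimension check, i.e.\ showing that $V$ acts with full-dimensional orbit on $\pi_{s_i}(wB/B)$. In the regime where the lemma is actually applied, namely $A \subset \Phi^+ \cap w\Phi^-$ as in Lemma \ref{lem:explicit}(2), this is automatic: for each $\alpha \in A - \{\beta\}$ the condition $w^{-1}\alpha < 0$ (together with $\alpha \neq \beta$) keeps $U_\alpha$ out of both $wBw^{-1}$ and the larger $wP_{s_i}w^{-1}$, so $V$ acts freely on $wB/B$ and on $\pi_{s_i}(wB/B)$, giving the desired dimensions $|A|$ and $|A|-1$. This is the one spot where the choice of reduced expression must be kept honest; everything else follows formally from the initial $\P^1$ identification.
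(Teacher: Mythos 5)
Your argument is correct and takes essentially the same route as the paper: after isolating $U_\beta$ with $\beta = ws_i(\alpha_i)$, the paper uses the identity $U_\beta\, w B/B = ws_i\, U_{\alpha_i} s_i B/B = ws_i\, Bs_iB/B$, which is exactly your identification $U_\beta\cdot wB/B = L - \{ws_iB/B\}$ with $L = wP_{s_i}/B$; both claims then follow by translating by $V = \prod_{\alpha\in A-\{\beta\}} U_\alpha$ and using that $\overline{Bs_iB/B}$ is the $s_i$-line through the base point.

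One remark: the ``main obstacle'' you flag at the end is not actually one, and the restriction to $A \subset \Phi^+\cap w\Phi^-$ is unnecessary. Since $V\cdot L = \pi_{s_i}^{-1}\bigl(V\cdot\pi_{s_i}(wB/B)\bigr)$ and $\pi_{s_i}\colon \B\to G/P_{s_i}$ is a locally trivial $\P^1$-bundle, taking closure commutes with taking preimage, so $\overline{V\cdot L} = \pi_{s_i}^{-1}\bigl(\overline{V\cdot\pi_{s_i}(wB/B)}\bigr)$ is automatically $\pi_{s_i}$-saturated; no dimension count, irreducibility, or freeness of the $V$-action is required. This is precisely what the paper's terse ``the second claim also follows since $\overline{Bs_iB/B}$ is a line of type $s_i$'' is invoking.
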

\begin{proof} We have
\begin{align*}\prod_{\alpha \in A} U_\alpha wB/B &= \left(\prod_{\alpha \in A-\{ws_i(\alpha_i)\}} U_\alpha\right)U_{ws_i(\alpha_i)} w B/B
\\&=\left(\prod_{\alpha \in A-\{ws_i(\alpha_i)\}} U_\alpha\right) ws_i U_{\alpha_i} s_i B/B
\\&=\left(\prod_{\alpha \in A-\{ws_i(\alpha_i)\}} U_\alpha\right) ws_i B s_i B/B,
\end{align*}
thus the claim follows from that $\overline{B s_i B/B} \owns B/B$. The second claim also follows since $\overline{B s_i B/B}$ is a line of type $s_i$.
\end{proof}
By Lemma \ref{bruhat_closure} and direct calculation we obtain the following information.
\begin{enumerate}
\item $\overline{Y_\iB}$ is a union of lines of type $s_1, s_2$, and $s_3$, respectively, but not of type $s_4$.  Also it contains $Z_6, Z_9,$ and $Z_{17}$.
\item $\overline{Y_\iD}$ is a union of lines of type $s_2, s_3$, and $s_4$, respectively, but not of type $s_1$.  Also it contains $Z_1, Z_{10}$, and $Z_{22}$.
\item $\overline{Y_\iE}$ is a union of lines of type $s_1, s_2$, and $s_4$, respectively, but not of type $s_3$.  Also it contains $Z_6, Z_{21}$, and an affine space of dimension 12 in $Y_\iC$.
\item $\overline{Y_\iC}$ is a union of lines of type $s_1, s_3$, and $s_4$, respectively, but not of type $s_2$. Also it contains $Z_2, Z_{18}$, and an affine space of dimension 12 in $Y_\iD$.
\item $\overline{Y_\iA}$ is a union of lines of type $s_1, s_2$, and $s_4$, respectively, but not of type $s_3$. Also it contains $Z_2, Z_4,$ and $Z_5$.
\item $\overline{Y_\iF}$ is a union of lines of type $s_1, s_2$, and $s_3$, respectively, but not of type $s_4$. Also it contains $Z_3, Z_{5}$, and $Z_{15}$.
\end{enumerate}
Thus $\overline{Y_\iC} \cap \overline{Y_\iA}  \supset Z_2, \overline{Y_\iA} \cap \overline{Y_\iF} \supset Z_5, \overline{Y_\iB} \cap \overline{Y_\iE} \supset Z_6$, which means $( \overline{Y_\iC},\overline{Y_\iA}), (\overline{Y_\iA},\overline{Y_\iF}), (\overline{Y_\iB},\overline{Y_\iE}) \in E(\Gamma_\nn)$. Also since each $\overline{Y_\iD}\cap \overline{Y_\iC}$ and $\overline{Y_\iE} \cap \overline{Y_\iC}$ contains an affine space of dimension 12, respectively, $(\overline{Y_\iD},\overline{Y_\iC}), ( \overline{Y_\iE},\overline{Y_\iC}) \in E(\Gamma_\nn)$. We claim that they are all the edges of $\Gamma_\nn$, i.e. $\Gamma_\nn$ is described as follows.

\begin{center}
\begin{tikzpicture}[scale=1]
	\node at (0 cm,0) {$\overline{Y_\iC}$};
	\node at (2 cm,0) {$\overline{Y_\iA}$};
	\node at (4 cm,0) {$\overline{Y_\iF}$};
	\node at (-4 cm,0) {$\overline{Y_\iB}$};
	\node at (-2 cm,0) {$\overline{Y_\iE}$};
	\node at (0 ,-2cm) {$\overline{Y_\iD}$};

    \draw[thick] (0 cm,0) circle (3 mm);
    \draw[thick] (2 cm,0) circle (3 mm);
    \draw[thick] (4 cm,0) circle (3 mm);
    \draw[thick] (-4 cm,0) circle (3 mm);
    \draw[thick] (-2 cm,0) circle (3 mm);
    \draw[thick] (0,-2cm) circle (3 mm);    
    
    \draw[thick] (0.3, 0) -- (1.7,0);
    \draw[thick] (2.3, 0) -- (3.7,0);
    \draw[thick] (-0.3, 0) -- (-1.7,0);
    \draw[thick] (-2.3, 0) -- (-3.7,0);
    \draw[thick] (0, -0.3) -- (0,-1.7);
\end{tikzpicture}
\end{center}

To that end, first we describe the action of $C(\nn) \simeq \Z/2$. By direct calculation any representative of $s_3 \in W$ in $G$ corresponds to $-1 \in \Z/2$, which permutes $Y_i, Z_i$ as follows. (Here $A \leftrightarrow B$ means $s_3(A) = B$ and $s_3(B) =A$.)
\begin{equation}\label{symm:exp}
\begin{gathered}
Y_\iE \leftrightarrow Y_\iA , \quad Y_\iB \leftrightarrow Y_\iF, \quad Z_3 \leftrightarrow Z_{17},  \quad Z_4\leftrightarrow Z_{21},  \quad Z_5\leftrightarrow Z_6,
\\Z_7\leftrightarrow Z_{16}, \quad Z_9 \leftrightarrow Z_{13}, \quad Z_{11} \leftrightarrow Z_{20}, \quad Z_{14}\leftrightarrow Z_{15}, \quad Z_{19} \leftrightarrow Z_{23}
\end{gathered}
\end{equation}
Thus it follows that 
$$s_3(\overline{Y_\iB}) = \overline{Y_\iF},\quad 
s_3(\overline{Y_\iD}) = \overline{Y_\iD},\quad 
s_3(\overline{Y_\iE}) = \overline{Y_\iA}, \quad
s_3(\overline{Y_\iC}) = \overline{Y_\iC},\quad
s_3(\overline{Y_\iA}) = \overline{Y_\iE},\quad 
s_3(\overline{Y_\iF}) = \overline{Y_\iB}.$$ ($s_3$ fixes $\overline{Y_\iC}$ because it is the only component which is a union of lines of type $s_1, s_3,$ and $s_4$, respectively. Similarly $s_3$ fixes $\overline{Y_\iD}$.) It means 
\begin{equation}\label{symm}
\text{$\Gamma_\nn$ is symmetric under the action which switches node $\overline{Y_\iB}$ and $\overline{Y_\iF}$, and node $\overline{Y_\iE}$ and $\overline{Y_\iA}$.}
\end{equation}

Next for each $y_i \in W$ we list elements $y_j, z_j \in W$ which is less than or equal to $y_i$ with respect to the Bruhat order on $W$.
\begin{enumerate}
\item $y_\iB \geq y_\iB, y_\iF, z_3, z_5, z_6, z_7, z_9, z_{13}, z_{16}, z_{17}, z_{18}, z_{22}$
\item $y_\iD \geq y_\iD, z_1, z_{10}, z_{12}, z_{22}$
\item $y_\iE \geq y_\iD, y_\iE, y_\iC, y_\iA,  z_1, z_2, z_4, z_5, z_6, z_8, z_{10}, z_{11}, z_{12}, z_{14}, z_{15}, z_{18}, z_{19}, z_{20}, z_{21}, z_{22}, z_{23}$
\item $y_\iC \geq  y_\iD, y_\iC, z_1, z_2, z_8, z_{10}, z_{11}, z_{12}, z_{18}, z_{20}, z_{22}$
\item $y_\iA \geq y_\iA, z_1, z_2, z_4, z_5, z_{11}, z_{14}, z_{19}, z_{23}$
\item $y_\iF \geq y_\iF, z_3, z_5, z_7, z_{13}, z_{16}$
\end{enumerate}
Note that there is no $y_j$ or $z_j$ which is both less than or equal to $y_\iC$ and $y_\iF$. Thus $(\overline{Y_\iC},\overline{Y_\iF}) \notin E(\Gamma_\nn)$. Similarly, $(\overline{Y_\iD},\overline{Y_\iF}) \notin E(\Gamma_\nn)$. Thus by (\ref{symm}), $(\overline{Y_\iB},\overline{Y_\iC}), (\overline{Y_\iB},\overline{Y_\iD}) \notin E(\Gamma_\nn)$.

We claim that $(\overline{Y_\iD},\overline{Y_\iA}) \not\in E(\Gamma_\nn)$. Suppose the contrary, then as $z_1$ is the only element among $y_j, z_j$ which is both less than or equal to $y_\iD$ and $y_\iA$, we should have $\overline{Y_\iD} \cap \overline{Y_\iA}\supset Z_1$. Since $l(z_1s_1)=l(z_1)+1$, it implies that $\overline{Z_1}$ is not a union of type $s_1$. Now we recall the following lemma.
\begin{lem} \label{spa:line}Let $X$ be a closed irreducible subvariety of $\B_\nn$ for some nilpotent $\nn \in \g$. Suppose there exists $s \in S$ such that for every element $x \in X$ there exists a line of type $s$ which contains $x$ and is contained in $\B_\nn$. If these lines are not all contained in $X$, then the union $Y$ of such lines is a closed irreducible subvariety of $\B_\nn$ and $\dim Y = \dim X+1$.
\end{lem}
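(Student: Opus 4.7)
The plan is to exploit the natural $\P^1$-bundle structure coming from the parabolic $P_s \subset G$ generated by $\br{B, s}$. Let $\pi_s \colon \B = G/B \to G/P_s$ be the projection. Then $\pi_s$ is a Zariski-locally trivial $\P^1$-bundle whose fibers are exactly the lines of type $s$. Under this setup the union $Y$ of all lines of type $s$ passing through points of $X$ is nothing but $\pi_s^{-1}(\pi_s(X))$, and the hypothesis that each such line is contained in $\B_\nn$ guarantees $Y \subseteq \B_\nn$.

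With this identification, closedness and irreducibility come for free. First I would note that $\pi_s$ is proper (as $\B$ is projective), so $\pi_s(X)$ is closed in $G/P_s$, and irreducible as the image of the irreducible $X$; hence $Y = \pi_s^{-1}(\pi_s(X))$ is closed in $\B$. Moreover, restricting the $\P^1$-bundle $\pi_s$ to $\pi_s(X)$ realizes $Y$ as a Zariski-locally trivial $\P^1$-bundle over the irreducible base $\pi_s(X)$, so $Y$ is irreducible.

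The main content of the lemma is the dimension count, and this is the step that requires the hypothesis. I would examine the surjective morphism $\pi_s|_X \colon X \to \pi_s(X)$. Each fiber is a closed subvariety of a $\P^1$, hence either a finite set of points or all of $\P^1$. Let $Z \subseteq \pi_s(X)$ denote the locus where the fiber is the whole $\P^1$; by local triviality of $\pi_s$ and closedness of $X$, the set $Z$ is closed. If $Z = \pi_s(X)$, then $\pi_s^{-1}(\pi_s(X)) \subseteq X$, so $X = Y$ and every line of type $s$ in question lies inside $X$, contradicting the hypothesis. Thus $Z \subsetneq \pi_s(X)$, so the generic fiber of $\pi_s|_X$ is $0$-dimensional, giving $\dim \pi_s(X) = \dim X$.

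Combining the two, $\dim Y = \dim \pi_s(X) + 1 = \dim X + 1$, which is the desired equality. There is no genuine obstacle; the only care needed is the dichotomy "finite fiber vs.\ full $\P^1$-fiber" of $\pi_s|_X$ and the use of irreducibility of $X$ to promote this to a generic statement — everything else is formal bundle-theoretic bookkeeping.
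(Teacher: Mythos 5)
The paper does not prove this lemma; it simply cites Spaltenstein's \emph{Lemme II.1.11} and moves on. Your argument is a correct, self-contained proof via the standard mechanism: lines of type $s$ are exactly the fibers of the $\P^1$-bundle $\pi_s\colon G/B\to G/P_s$, so $Y=\pi_s^{-1}(\pi_s(X))$, which is closed (properness of $\pi_s$) and irreducible (a $\P^1$-bundle over the irreducible $\pi_s(X)$), and the hypothesis that not every such line lies in $X$ rules out the case where $\pi_s|_X$ has $\P^1$-fibers generically, forcing the generic fiber to be finite and hence $\dim\pi_s(X)=\dim X$, so $\dim Y=\dim X+1$. One small point worth tightening: you assert that the locus $Z\subseteq\pi_s(X)$ of full $\P^1$-fibers is closed ``by local triviality and closedness of $X$,'' which is true but deserves a word (e.g.\ upper semicontinuity of fiber dimension, or flatness of $\pi_s$ implying the image of the open set $(U\times\P^1)\setminus X$ is open); alternatively, you can dispense with $Z$ entirely and argue directly with the generic fiber dimension of the dominant morphism $\pi_s|_X\colon X\to\pi_s(X)$, noting that if the generic fiber were $1$-dimensional then it would be all of $\P^1$ on a dense open $U\subseteq\pi_s(X)$, whence $X\supseteq\overline{\pi_s^{-1}(U)}=Y$, contradicting the hypothesis. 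Either way the argument is sound and is almost certainly the same one Spaltenstein gives.
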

\begin{proof} \cite[Lemme II.1.11]{spaltenstein:book}.
\end{proof}

Now let $Y$ be the union of lines of type $s_1$ which pass $\overline{Z_1}$. As $\overline{Y_\iA}$ is a union of lines of type $s_1$ and contains $Z_1$, by the previous lemma it follows that $Y=\overline{Y_\iA}$. By construction we have
$$Y = \overline{\prod_{\alpha \in A_{z_1}} U_\alpha z_1 U_{\alpha_1} s_1 B/B}$$
and $Y \subset \overline{Bz_1s_1B/B}$ as $l(z_1s_1) = l(z_1)+1$. But $z_1s_1< y_\iA$, thus $Y \not\supset Y_\iA$, which is a contradiction. Thus it follows that $( \overline{Y_\iD},\overline{Y_\iA}) \not\in E(\Gamma_\nn)$. By (\ref{symm}), we also have $(\overline{Y_\iD}, \overline{Y_\iE}) \not\in E(\Gamma_\nn)$.

We also claim that $(\overline{Y_\iB},\overline{Y_\iA}) \not\in E(\Gamma_\nn)$. Suppose the otherwise, then as $z_5$ is the only element among $y_j, z_j$ which is both less than or equal to $y_\iA$ and $y_\iB$, we should have $ \overline{Y_\iB}\cap \overline{Y_\iA} \supset Z_5$. But by (\ref{symm:exp}) it means $\overline{Y_\iE}\cap \overline{Y_\iF} \supset Z_6$, which is impossible as $y_\iF \not> z_6$. Now by (\ref{symm}) we also have $(\overline{Y_\iE}, \overline{Y_\iF}) \not\in E(\Gamma_\nn)$.

Thus it only remains to show that $(\overline{Y_\iE},\overline{Y_\iA}), (\overline{Y_\iB}, \overline{Y_\iF}) \not \in E(\Gamma_\nn)$. First assume that $(\overline{Y_\iE},\overline{Y_\iA}) \in E(\Gamma_\nn)$. Then the list of $y_j, z_j$ which is both less than or equal to $y_\iA$ and $y_\iE$ is as follows.
$$y_\iE,y_\iA \geq y_\iA, z_1, z_2, z_4, z_5, z_{11}, z_{14}, z_{19}, z_{23}$$
If $\overline{Y_\iE}$ and $\overline{Y_\iA}$ have a codimension 1 intersection in $Y_\iA$, then by (\ref{symm:exp}) they also have a codimension 1 intersection in $Y_\iE$, which is impossible since $y_\iA < y_\iE$. Also, $\overline{Y_\iA}$ cannot contain $Z_1$ since $\overline{Y_\iD} \supset Z_1$ and we already know that $(\overline{Y_\iD},\overline{Y_\iA}) \not \in E(\Gamma_\nn)$. Also, $\overline{Y_\iE}\cap \overline{Y_\iA}$ cannot contain any of $Z_4, Z_5, Z_{11}, Z_{14}$, respectively, since otherwise by (\ref{symm:exp}) $\overline{Y_\iE}\cap\overline{Y_\iA}$ also contains $Z_{21}, Z_6, Z_{20}, Z_{15}$, respectively, which is impossible as $z_{21}, z_6, z_{20}, z_{15} \not<y_\iA$.

Suppose $\overline{Y_\iE} \cap \overline{Y_\iA} \supset Z_2$. As $l(z_2s_1)=l(z_2)+1$, $\overline{Z_2}$ is not a union of lines of type $s_1$. Thus if we let $Y$ be the union of lines of type $s_1$ which pass $\overline{Z_2}$, then by Lemma \ref{spa:line} we have $Y=\overline{Y_\iE}$. But it is impossible since $Y = \overline{\prod_{\alpha \in \Phi_{z_2}} U_\alpha z_2 U_{\alpha_1}s_1 B/B} \subset Bz_2s_1B/B$ and $z_2s_1 < y_\iE$. Thus if $( \overline{Y_\iE},\overline{Y_\iA}) \in E(\Gamma_\nn)$ then $\overline{Y_\iE},\overline{Y_\iA}$ contains either $Z_{19}$ or $Z_{23}$. By (\ref{symm:exp}), it follows that $\overline{Y_\iE},\overline{Y_\iA}$ contains both $Z_{19}$ and $Z_{23}$.

Now we recall the following fact.\footnote{The author thanks George Lusztig for informing him this result.}
\begin{lem} \label{smoothint}Suppose $\nn \in \g$ be a nilpotent element and $h \in \g$ be a semisimple element such that $[h,\nn] = 2\nn$. Let $\g= \bigoplus_{i \in \Z} \g_i$ be the weight decomposition of $\g$ with respect to $\ad(h)$, i.e. $\ad(h)$ acts on $\g_i$ by multiplication of $i$. Let $P \subset G$ be the parabolic subgroup of $G$ such that $\Lie P = \oplus_{i \geq 0} \g_i$. Then the intersection of $\B_\nn$ with any $P$-orbit of $\B$ is smooth.
\end{lem}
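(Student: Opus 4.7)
The plan is to identify $\B_\nn \cap P \cdot x$ with (a disjoint union of) Bialynicki--Birula attracting cells on the smooth Slodowy variety, whence smoothness follows from the Bialynicki--Birula theorem for smooth $\G_m$-varieties. Let $\lambda : \G_m \to G$ be the cocharacter with derivative $h$; then $\Ad(\lambda(t))\nn = t^2 \nn$, so $\lambda(\G_m)$ preserves $\B_\nn$ (if $\Ad(g^{-1})\nn \in \b$ then $\Ad((\lambda(t) g)^{-1})\nn = t^{-2}\Ad(g^{-1})\nn \in \b$). Since $P$ is the attracting parabolic for $\lambda$ acting by conjugation on $G$, its orbits on $\B$ coincide with the $\lambda$-attractors: for any $\lambda$-fixed $x \in \B$, $P\cdot x = \{y \in \B : \lim_{t\to 0}\lambda(t)y \in L \cdot x\}$, where $L \subset P$ is the Levi and $L\cdot x$ is the component of $\B^\lambda$ containing $x$.

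Extend $(\nn, h)$ to an $\mathfrak{sl}_2$-triple $(\nn, h, f)$ (by Jacobson--Morozov in good characteristic) and form the Slodowy slice $S := \nn + \g^f \subset \g$ together with its preimage $\widetilde{S} := \pi^{-1}(S)$ under the Grothendieck--Springer map $\pi : \widetilde{\g} := \{(X, gB) : \Ad(g^{-1})X \in \b\} \to \g$. By Slodowy's theorem $\widetilde{S}$ is smooth and $\pi^{-1}(\nn) = \B_\nn$. Define the twisted $\G_m$-action $t \star X := t^{-2}\Ad(\lambda(t))X$ on $\g$, lifted to $\widetilde{\g}$ by $t \star (X, gB) := (t \star X, \lambda(t)gB)$. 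Since $\g^f$ lies in $h$-weights $\le 0$, $\star$ preserves $S$ (and hence $\widetilde{S}$); the element $\nn$ is $\star$-fixed, and for $X = \nn + Y \in S$ the limit $\lim_{t \to 0} t\star X$ exists in $\g$ if and only if $Y = 0$, since the nonzero weight-$i$ components of $Y \in \g^f$ (with $i \le 0$) are rescaled by the diverging factor $t^{i-2}$.

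Consequently the $\star$-fixed locus of $\widetilde{S}$ is $\{\nn\} \times \B_\nn^\lambda$, and for each connected component $F$ of $\B_\nn^\lambda$ the associated attractor in $\widetilde{S}$ is $\{(\nn, y) : y \in \B_\nn,\ \lim_{t\to 0}\lambda(t)y \in F\}$; by Bialynicki--Birula applied to the smooth variety $\widetilde{S}$, this attractor is a smooth locally closed subvariety. Given a $\lambda$-fixed $x \in \B_\nn$, a point $y \in \B_\nn \cap P\cdot x$ satisfies $\lim\lambda(t)y \in L\cdot x$, and since $\B_\nn$ is closed and $\lambda$-invariant in $\B$, this limit lies in $L\cdot x \cap \B_\nn$. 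Hence $\B_\nn \cap P\cdot x$ is the disjoint union of these attractors as $F$ ranges over the connected components of $L\cdot x \cap \B_\nn$, and is therefore smooth.

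The main technical point is the application of Bialynicki--Birula to the non-proper smooth variety $\widetilde{S}$, but smoothness of the attractor is a local statement on smooth $\G_m$-varieties and so is legitimate by standard local versions of the theorem. The remaining ingredients---Jacobson--Morozov, Slodowy's transversal slice smoothness, and the identification of $P$-orbits with $\lambda$-attractors---are standard consequences of the assumption that $\ch \k$ is good.
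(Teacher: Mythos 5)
The paper's own ``proof'' is simply a citation to \cite[Proposition 3.2]{dclp} (with a remark about characteristic), so your argument is a genuinely independent, self-contained one. Your route --- lift everything to the Slodowy variety $\widetilde{S}=\pi^{-1}(\nn+\g^f)$, use Slodowy's smoothness theorem, define the contracting $\G_m$-action $t\star X = t^{-2}\Ad(\lambda(t))X$ so that the $\star$-fixed locus of $\widetilde{S}$ is $\{\nn\}\times\B_\nn^\lambda$, and read off $\B_\nn\cap P\cdot x$ as a union of Bialynicki--Birula attractors in a smooth variety --- is correct and clean. What it buys over the paper's approach is a proof that can be read off from standard facts about $\G_m$-actions on smooth quasi-projective varieties, without unpacking the deformation/tangent-space computation in \cite{dclp}. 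Two small points deserve attention. First, your line ``extend $(\nn,h)$ to an $\mathfrak{sl}_2$-triple by Jacobson--Morozov'' is not automatic: Jacobson--Morozov produces \emph{some} triple $(\nn,h_0,f_0)$, and Morozov's extension lemma completes $(\nn,h)$ to a triple only under the hypothesis $h\in[\g,\nn]$, which the statement does not assume (one can produce semisimple $h$ with $[h,\nn]=2\nn$ that lie in no triple containing $\nn$). This is really an imprecision in the lemma itself --- \cite{dclp} explicitly works with an $\mathfrak{sl}_2$-triple, and the paper's application (type $F_4$, the standard Dynkin $h$ with $P=\langle B,s_1,s_2,s_3\rangle$) is of that form --- but your argument should either add the hypothesis that $h$ is a triple element, or observe that one may choose $f_0\in\g_{-2}(h)$ from an arbitrary triple, in which case $\ad(h)$ still stabilizes $\g^{f_0}$ and your $\star$-action still contracts $S$ to $\nn$. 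Second, the final sentence (``disjoint union of these attractors, therefore smooth'') tacitly uses that each attractor is \emph{open} in $\B_\nn\cap P\cdot x$; this does hold, because the limit map $P\cdot x\to L\cdot x$ is a morphism and each $F$ is open and closed in $L\cdot x\cap\B_\nn$, but it is worth one sentence.
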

\begin{proof} It follows from \cite[Proposition 3.2]{dclp}. (In the paper $\k=\C$ is assumed, but its proof is still valid in our assumptions.)
\end{proof}
In our case the parabolic subgroup $P \subset G$ which is generated by $\br{B,s_1,s_2, s_3}$ satisfies the condition in the lemma above. Now if we intersect $\B_\nn$ with $Ps(4323412)B/B$, we have
$$\B_\nn \cap (Ps(4323412)B/B) = Y_\iE\sqcup Y_\iA\sqcup Z_4 \sqcup Z_{14}\sqcup Z_{15} \sqcup Z_{19} \sqcup Z_{21} \sqcup Z_{23}\sqcup \text{(dimension $\leq 11$)}.$$
Thus $\overline{Y_\iE}  \cap (Ps(4323412)B/B)$ and $\overline{Y_\iA} \cap (Ps(4323412)B/B)$ are irreducible components of $\B_\nn \cap (Ps(4323412)B/B)$. (One can indeed show that these two are the only irreducible components of $\B_\nn \cap (Ps(4323412)B/B)$, but this is not needed.)

Since $Z_{19}, Z_{23} \subset  \overline{Y_\iE}\cap \overline{Y_\iA} $, it follows that $\overline{Y_\iE}  \cap (Ps(4323412)B/B)$ and $\overline{Y_\iA}  \cap (Ps(4323412)B/B)$ have nonempty intersection. But it is impossible since $\B_\nn \cap (Ps(4323412)B/B)$ is smooth, which implies that every irreducible component is pairwise disjoint. Thus it follows that $(\overline{Y_\iE},\overline{Y_\iA}) \not\in E(\Gamma_\nn)$.

Now we claim that $(\overline{Y_\iB}, \overline{Y_\iF}) \not\in E(\Gamma_\nn)$. Its proof is completely analogous to that of $( \overline{Y_\iE},\overline{Y_\iA}) \not\in E(\Gamma_\nn)$. First the following is the list of $y_j, z_j$ which is both less than or equal to $y_\iB, y_\iF$.
$$y_\iB, y_\iF \geq y_\iF, z_3, z_5, z_7, z_{13}, z_{16}$$
If $\overline{Y_\iB}$ and $\overline{Y_\iF}$ has a codimension 1 intersection in $Y_\iF$, then by (\ref{symm:exp}) they also intersect in $Y_\iB$ with codimension 1, which is impossible since $y_\iF \not>y_\iB$. Also, if $\overline{Y_\iB}\cap\overline{Y_\iF}$ contains any of $Z_3, Z_5, Z_{13}$, respectively, then by (\ref{symm:exp}) it also contains $Z_{17}, Z_{6}, Z_{9}$, which is impossible since $y_\iF \not> z_{17}, z_6, z_9$. Thus it follows that $\overline{Y_\iB}\cap\overline{Y_\iF}$ contains either $Z_7$ or $Z_{16}$. By (\ref{symm:exp}), it means $\overline{Y_\iB}\cap\overline{Y_\iF}\supset Z_7 \sqcup Z_{16}.$

Now if we intersect $\B_\nn$ with $Ps(4323123)B/B$, we have
$$\B_\nn \cap (Ps(4323123)B/B) = Y_\iB \sqcup Y_\iF \sqcup Z_3 \sqcup Z_7 \sqcup Z_9 \sqcup Z_{13} \sqcup Z_{16} \sqcup Z_{17}\sqcup \text{(dimension $\leq 11$)}.$$
Note that $\B_\nn \cap (Ps(4323123)B/B)$ is smooth by Lemma \ref{smoothint}. Also, $\overline{Y_\iB} \cap (Ps(4323123)B/B)$ and $\overline{Y_\iF} \cap (Ps(4323123)B/B)$ are irreducible components of $\B_\nn \cap (Ps(4323123)B/B)$. However, if $Z_7 \sqcup Z_{16} \subset \overline{Y_\iB} \cap  \overline{Y_\iF}$ then $\overline{Y_\iB} \cap (Ps(4323123)B/B)$ and $\overline{Y_\iF} \cap (Ps(4323123)B/B)$ are not disjoint, which is absurd. Thus it follows that $(\overline{Y_\iB}, \overline{Y_\iF}) \not\in E(\Gamma_\nn)$.

\bibliographystyle{amsalphacopy}
\bibliography{minimal}

\end{document}